\documentclass[10pt,electronic]{amsart}        
\usepackage{baskervald}
\usepackage[swedish, english]{babel}
\usepackage[T1]{fontenc} 
\usepackage{accents}
\makeatletter
\usepackage{array}
\usepackage{tabularx}

\usepackage{hyperref}
\usepackage[noabbrev,capitalize]{cleveref}
\hypersetup{
	colorlinks,
	linkcolor={red!50!black},
	citecolor={blue!50!black},
	urlcolor={blue!80!black}
}

\usepackage{multicol}
\usepackage{adjustbox}
\usepackage{url}
\usepackage[hmargin=3.4cm,vmargin=3.5cm]{geometry} 
\usepackage{graphicx}                         
\usepackage{amsmath}
\usepackage{amsthm}                      
\usepackage{amssymb} 
\usepackage{mathrsfs}
\usepackage{stmaryrd}
\usepackage{mathtools}
\usepackage[shortlabels]{enumitem}
\usepackage{tikz-cd}
\usetikzlibrary{babel}
\usepackage{euscript}
\renewcommand{\mathcal}{\EuScript}
\theoremstyle{plain}                                              
\newtheorem{thm}{Theorem}[section]

\newtheorem{lem}[thm]{Lemma}
\newtheorem{prop}[thm]{Proposition}
\newtheorem{cor}[thm]{Corollary}

\theoremstyle{definition}
\newtheorem{example}[thm]{Example}
\newtheorem{rem}[thm]{Remark}

\DeclareMathOperator{\Gal}{Gal}
\DeclareMathOperator{\rank}{rank}
\newcommand{\FZ}{\mathsf{FZ}}
\usepackage{caption}
\title{Motivic Euler characteristics of moduli spaces of curves}

\begin{document}

\newcommand{\Q}{\mathbf Q}
\newcommand{\Qbar}{\overline\Q}
\newcommand{\Z}{\mathbf Z}
\newcommand{\Mb}{\overline M}
\renewcommand{\L}{\mathsf L}
\renewcommand{\S}{\mathsf S}
\newcommand{\F}{\mathbf F}
\newcommand{\V}{\mathbb V}
\renewcommand{\P}{\mathbf{P}}

\newcommand{\Dan}[1]{\textcolor{orange}{#1}}
\newcommand{\Jonas}[1]{\textcolor{purple}{#1}}
\newcommand{\Sam}[1]{\textcolor{red}{#1}}

\author{Jonas Bergström}

\author{Samir Canning}

\author{Dan Petersen}

\author{Johannes Schmitt}

\begin{abstract}A combination of recent results of Canning--Larson--Payne--Willwacher and Payne--Willwacher is that the motivic Euler characteristic of the moduli space $M_{g,n}$ of nonsingular curves with $n$ markings is polynomial in the Tate motive if and only if $g=0$ or $3g+2n<25$. In the cases when $g\geq 1$ and $3g+2n<25$, there are $32$ pairs $(g,n)$ where the motivic Euler characteristic has been computed. We compute the motivic Euler characteristic for $9$ more pairs, leaving only $6$ pairs where the motivic Euler characteristic is polynomial but unknown. In genus $4$, we moreover compute the answer when $n=7$, i.e.~the first non-polynomial case.
\end{abstract}

 \maketitle

\section{Introduction}
\subsection{Motivic Euler characteristics}
Let $X$ be a separated $d$-dimensional Deligne--Mumford stack of finite type over $\Q$. The motivic Euler characteristic of $X$ is defined as
\[\chi_c(X) = \sum_{i=0}^{2d} (-1)^i [H^i_c(X_{\Qbar},\Q_{\ell})] \in K_0(\Gal_\Q).\] 
Here $\Gal_{\Q}$ is the category of $\ell$-adic $\Gal(\Qbar/ \Q)$ representations.  If the symmetric group $S_n$ acts on $X$, then $\chi_c^{S_n}(X)$ denotes the evident equivariant refinement $\chi_{c}^{S_n}(X)$ valued in $K_0(\Gal_{\Q}^{S_n})$, the Grothendieck group of the category of $S_n\times \Gal(\Qbar/\Q)$-representations.

 
 Let $\L=[H^2(\P^1,\Q_{\ell})]$. We say that the compactly supported cohomology of $X$ is of Tate type if $[H^i_c(X,\Q_{\ell})]\in K_0(\Gal_\Q)$ is a sum of powers of $\L$ for all $i$.  In this case, $\chi_c(X)$ is a polynomial in $\L$, but the converse does not necessarily hold.  Moreover, if $X$ has a $S_n$-action, then the coefficient of each irreducible representation of $S_n$ in $\chi_c^{S_n}(X)$ is a polynomial in $\L$. 

 The (equivariant) motivic Euler characteristic encodes many natural invariants of $X$. Via the dimension morphism $K_0(\Gal_\Q)\rightarrow \Z$, one obtains the usual (equivariant) Euler characteristic from the motivic one, and via the trace of Frobenius, the (equivariant) motivic Euler characteristic encodes the (equivariant) point counts of $X$ over $\F_q$ for prime powers $q=p^k$, where $p$ is a prime of good reduction for $X$. 

\subsection{Results}
Here, we investigate the $S_n$-equivariant motivic Euler characteristics of moduli spaces of curves. Let $M_{g,n}$ be the moduli space of nonsingular curves of genus $g$ with $n$ markings and $\Mb_{g,n}$ its compactification by stable curves. These moduli spaces are separated nonsingular Deligne--Mumford stacks of finite type over $\Q$ of dimension $3g-3+n$, and they admit an $S_n$-action by permutation of the markings. The following theorem, a combination of work of Canning, Larson, Payne, and Willwacher \cite[Theorem 1.5]{CLPW} and Payne and Willwacher \cite[Theorem 1.1]{PayneWillwacherPoly}, completely classifies when $\chi_c(M_{g,n})$ is polynomial in $\L$.
\begin{thm}\label{thm:CLPW} Let $g$ and $n$ be nonnegative integers such that $2g+n\geq 3$. Then the following are equivalent:
\begin{enumerate}
    \item The motivic Euler characteristic $\chi_c(M_{g,n})$ is polynomial in $\L$;
    \item The compactly supported cohomology of $M_{g,n}$ is of Tate type;
    \item $g=0$ or $3g+2n<25$.
\end{enumerate}

\end{thm}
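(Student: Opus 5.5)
The plan is to prove the cycle of implications $(2)\Rightarrow(1)\Rightarrow(3)\Rightarrow(2)$. The first is immediate: if each $[H^i_c(M_{g,n},\Q_\ell)]$ is a sum of powers of $\L$, then so is the alternating sum. The real content lies in the other two implications. For these I would rely on \cite[Theorem 1.5]{CLPW} for one direction and \cite[Theorem 1.1]{PayneWillwacherPoly} for the other, and combine them with the standard stratification of $\Mb_{g,n}$ by stable graphs.

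\textbf{$(3)\Rightarrow(2)$.} For $g=0$, I would induct on $n$ using the fibration $M_{0,n+1}\to M_{0,n}$, which is a complement of $n$ sections in a $\P^1$-bundle. Alternatively, one can use the explicit description of $M_{0,n}$ as a hyperplane arrangement complement, whose cohomology is pure Tate by Arnold--Brieskorn. For $g\geq 1$ and $3g+2n<25$, I would invoke \cite[Theorem 1.5]{CLPW}. Its input is that $H^*(\Mb_{g',n'})$ is of Tate type for all $(g',n')$ with $3g'+2n'<25$; this rests on the vanishing of $H^{11}$ and on the fact that the only non-tautological, non-Tate contributions first appear via $\S_{12}$-type motives in $\Mb_{1,11}$. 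Propagating from $\Mb$ to $M$ goes through the Getzler--Kapranov/Feynman transform formula. This expresses $\chi_c(M_{g,n})$, and more strongly the weight-graded pieces of $H_c^*(M_{g,n})$ via the weight spectral sequence, in terms of $H^*(\Mb_{g',n'})$ for boundary strata vertices. Every vertex $(g',n')$ appearing satisfies $3g'+2n'\leq 3g+2n<25$. Since the category of mixed Tate Galois representations is closed under subquotients and extensions, each $H^i_c$ is Tate.

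\textbf{$(1)\Rightarrow(3)$.} This is the main obstacle, and it is the content of \cite[Theorem 1.1]{PayneWillwacherPoly}. Suppose $g\geq1$ and $3g+2n\geq 25$. I must exhibit a non-Tate contribution to $\chi_c(M_{g,n})$ that cannot cancel. My plan is to look at the weight $11$ graded piece, that is, to track the coefficient of $[\S_{12}]$ (equivalently $\L\S_{12}^{\ldots}$ twists) in $\chi_c$. Payne--Willwacher compute $\mathrm{gr}^W_{11}H^*_c(M_{g,n})$ via a graph complex whose vertices are decorated by $H^{11}(\Mb_{1,11})\cong \S_{12}$-pieces. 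They show that its Euler characteristic, computed by a generating function, is nonzero exactly in the range $3g+2n\geq 25$. Because $\S_{12}$ is linearly independent from powers of $\L$ in $K_0(\Gal_\Q)$ (compare traces of Frobenius, or Hodge types), a nonzero multiplicity forces non-polynomiality. The delicate point is verifying nonvanishing of this graph-complex Euler characteristic in every case. I would first establish it for the boundary cases $3g+2n\in\{25,26\}$ by explicit computation. I would then propagate to larger $(g,n)$ by a generating-function positivity argument or by pullback/gluing injectivity; this is what the cited theorems accomplish, and I would defer to them there.
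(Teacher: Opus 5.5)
Read as a proof by citation, your cycle $(2)\Rightarrow(1)\Rightarrow(3)\Rightarrow(2)$ is exactly what the paper does. The paper gives no argument of its own and simply combines \cite[Theorem 1.5]{CLPW} for $(3)\Rightarrow(2)$ with \cite[Theorem 1.1]{PayneWillwacherPoly} for $(1)\Rightarrow(3)$. However, your account of $(3)\Rightarrow(2)$ deduces Tate type of $H^\bullet_c(M_{g,n})$ from Tate type of the $\Mb_{g',n'}$ with $3g'+2n'<25$ via the weight spectral sequence. That argument rests on a false claim and would not work on its own.

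It is not true that every vertex of a stable graph of type $(g,n)$ satisfies $3g'+2n'\leq 3g+2n$. A self-loop adds $3$ to $3g+2n$ but adds $4$ to the vertex's $3g'+2n'$. For instance, $(g,n)=(2,9)$ has $3g+2n=24$, yet the divisor $\Delta_{\mathrm{irr}}\subset\Mb_{2,9}$ has normalization $\Mb_{1,11}/S_2$, and $H^{11}(\Mb_{1,11})$ is the non-Tate $S[12]$. In $\Mb_8$, a genus-$1$ vertex with seven self-loops even produces $\Mb_{1,14}$. So non-Tate classes genuinely occur on the strata feeding the spectral sequence inside the polynomial range, and closure of mixed Tate objects under extensions and subquotients does not help. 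These classes disappear only after taking invariants under graph automorphisms, with the orientation twist. In the example, $S_{11}$ acts on $H^{11}(\Mb_{1,11})$ by the sign character, so the transposition swapping the two branches of the node kills it. Proving that such vanishing happens for every $(g,n)$ with $3g+2n<25$ is the real content of the cited theorem and the source of the bound $25$. You should cite it directly as a statement about $H^\bullet_c(M_{g,n})$, not as an input about $\Mb$ that you then propagate yourself.

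A smaller point concerns $(1)\Rightarrow(3)$. Propagation by pullback or gluing injectivity would only show that non-Tate classes survive in $H^\bullet_c(M_{g,n})$, which is the failure of $(2)$. It would not show the failure of $(1)$, since $S[12]$-multiplicities in different degrees could cancel in $\chi_c$; the paper itself notes that polynomiality of $\chi_c$ is weaker than Tate type. What is needed is non-polynomiality of the Euler characteristic itself for all $g\geq 1$ with $3g+2n\geq 25$, which is what \cite[Theorem 1.1]{PayneWillwacherPoly} supplies. Only the generating-function route in your sketch addresses this.
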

A natural question is to compute $\chi^{S_n}_c(M_{g,n})$ whenever it is polynomial. When $g=0$, the equivariant motivic Euler characteristic can be computed for all $n$ recursively \cite{GetzlerGenus0}, so we focus on the cases $g\geq 1$ and $3g+2n<25$. For these $47$ pairs $(g,n)$, the polynomials have been previously computed for the $32$ pairs when $g\leq 3$ or $g=4$ and $n\leq 3$ through the combined effort of many authors over many years \cite{getzler-resolving, BergstromFaber, PetersenA2,Petersendelliptic,CLP,BT,Tommasigenus4,BergstromFaberPayne}.\footnote{The cohomology of $M_5$ has been computed by Bergström--Wennink \cite{BergstromWennink}, but the computation of $\chi_c(M_5)$ here was obtained first, as noted in \cite[Remark 1.4]{BergstromWennink}.} 
We compute the polynomials for $9$ of the $15$ remaining pairs. 

Moreover, we also compute $\chi^{S_7}_c(M_{4,7})$, which is \emph{not} polynomial in $\L$. Indeed, it contains $S[12]:=[H^{11}(\Mb_{1,11},\Q_{\ell})]$; the cohomology group $H^{11}(\Mb_{1,11},\Q_{\ell})$ is isomorphic to the irreducible $2$-dimensional Galois representation attached to the Ramanujan cusp form $\Delta$ of weight $12$ for $\mathrm{SL}_2(\Z)$. For any prime $p$, the trace of the $p$th Frobenius on $S[12]$ equals $\tau(p)$, the $p$th value of the Ramanujan tau-function. We remark that in contrast, $\chi^{S_7}_c(\Mb_{4,7})$ is in fact polynomial in $\L$.

\begin{thm}\label{thm:Mgn}We compute $\chi_c^{S_n}(M_{g,n})$ for $g=4$ and $4\leq n\leq 7$, $g=5$ and $n\leq 3$, and $g=6$ and $n\leq 1$. The results when $g=5$ and $g=6$ are as follows: 
{\footnotesize 
\begin{align*}
\chi_c(M_{5})&=\L^{12}+\L^{11}+\L^{10}-\L^8+1
, \\
\chi_c(M_{5,1})&=\L^{13}+2\L^{12}+3\L^{11}+\L^{10}-2\L^9-\L^4+\L+1
, \\
\chi_c^{S_2}(M_{5,2})&=
(\L^{14}+2\L^{13}+4\L^{12}+2\L^{11}-4\L^{10}-\L^9+\L^8-\L^5-\L^4-\L^3+2\L^2+3\L+1)s_{2}\\
&+(\L^{13}+2\L^{12}+2\L^{11}-\L^{10}+\L^8-\L^7-\L^5+\L)s_{1,1}
, \\
\chi_c^{S_3}(M_{5,3})&=(\L^{15}+2\L^{14}+4\L^{13}+2\L^{12}
-6\L^{11}-4\L^{10}+3\L^9-\L^8+2\L^7-\L^6-4\L^5+\L^4+4\L^2+2\L-1)s_{3}\\
&+(\L^{14}+3\L^{13}
+3\L^{12}-4\L^{11}-3\L^{10}+5\L^9-2\L^8+\L^7-\L^6-5\L^5+3\L^4+3\L^2+5\L+1)s_{2,1}\\
&+(\L^{12}+2\L^9-\L^8-\L^7+\L^6+3\L^4-3\L^2)s_{1,1,1}
, \\
\chi_c(M_{6})&=\L^{15}+\L^{14}+2\L^{13}+\L^{12}-\L^{10}+\L^3-1
, \\
\chi_c(M_{6,1})&=
\L^{16}+2\L^{15}+4\L^{14}+4\L^{13}-3\L^{11}+2\L^5+\L^4+2\L^3-\L^2-3\L-1
.
\end{align*}
}
The formulas when $g=4$ 
are tabulated at the end of Section \ref{sec:genus4final}.
\end{thm}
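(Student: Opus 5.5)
The plan is to exploit the fact, guaranteed by Theorem~\ref{thm:CLPW} in all the polynomial cases, that $\chi_c^{S_n}(M_{g,n})$ is determined by its point counts. By Theorem~\ref{thm:CLPW}, for $g=4$, $n\le 6$, $g=5$, $n\le 3$ and $g=6$, $n\le 1$ the compactly supported cohomology is of Tate type. Hence each coefficient of $s_\lambda$ in $\chi_c^{S_n}(M_{g,n})$ is a polynomial in $\L$ of degree at most $3g-3+n$. Such a polynomial is determined by the $S_n$-equivariant point counts $\#M_{g,n}(\F_q)$ (twisted by permutations) for finitely many $q$. It is also constrained by Poincar\'e duality considerations, Euler characteristic formulas (Harer--Zagier/Getzler for $\chi^{S_n}(M_{g,n})$ via the dimension morphism), and known low-degree cohomology. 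The strategy is therefore to produce enough information, point counts and structural constraints, to pin down each polynomial.

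Concretely, I would proceed in three steps.

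First, stratify $M_{g,n}$ by gonality, or more practically by canonical models: hyperelliptic curves, trigonal curves, and for $g=4$ canonical complete intersections of a quadric and a cubic in $\P^3$. The hyperelliptic locus has known equivariant counts through Bergstr\"om's formulas, and trigonal loci can be counted via Bergstr\"om--Zieger-style sums over Maroni strata. For $g=4$ with $n\le 7$ I would count pointed curves on quadric cones and smooth quadrics directly over $\F_q$, summing over $(Q,C)$ with $n$ ordered points. This gives equivariant counts as explicit functions of $q$ for all small $q$, and possibly in closed form.

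Second, for $g=5,6$, where direct counting is harder, I would work instead with $\Mb_{g,n}$. Its Euler characteristic is polynomial and can be computed from the stratification by stable graphs: $\chi_c(\Mb_{g,n})=\sum_\Gamma \chi_c(M_\Gamma)$, where the vertex contributions are the already known $\chi_c^{S_m}(M_{g',m})$ for $(g',m)$ smaller. Since $H^*(\Mb_{g,n})$ is pure, and in these ranges tautological or known to be Tate (e.g.\ via Canning--Larson--Payne), its Euler characteristic can be obtained from known Betti numbers and Poincar\'e duality. The $M_{g,n}$ term is then extracted by subtraction. For $M_{4,7}$ the same graph sum applies, with $\chi_c^{S_7}(\Mb_{4,7})$ polynomial by the remark. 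The boundary strata contribute the $S[12]$ term through the genus one vertices $M_{1,11}$-type pieces, so $M_{4,7}$ inherits a compensating $S[12]$ term.

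Third, I would cross-check every answer: integer Euler characteristics should match Getzler's generating functions, the formulas must be consistent with known $H^0$ and low-degree stable cohomology, and finite-field counts should match for several $q$.

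The main obstacle is the second step for $g=5,6$. It requires an independent determination of $\chi_c(\Mb_{g,n})$, or else full point counts of $M_{5,n}$ and $M_{6,n}$, which no stratification by canonical models handles easily. My first attempt would be to count over $\F_q$ using the Mukai models (genus $6$ curves as sections of the del Pezzo quintic / Grassmannian, genus $5$ as complete intersections of three quadrics), supplemented by degree bounds that reduce the unknowns to a few coefficients. These remaining coefficients would then be fixed by the known Euler characteristic and by vanishing of cohomology above the virtual cohomological dimension.
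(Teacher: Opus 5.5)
There is a genuine gap: in neither regime does your plan contain a mechanism that actually pins down the polynomials.

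For $g=5,6$ your second step is circular. The stratification identity $\chi_c^{S_n}(\Mb_{g,n})=\sum_\Gamma\chi_c(M_\Gamma)$ contains the open stratum $M_{g,n}$ itself, so it only relates the two unknowns. The ``known Betti numbers'' of $\Mb_5,\dots,\Mb_{6,1}$ that you would use to break the circle are not known in advance; determining them is exactly Theorem~\ref{thm:Mbargn}. You do mention that the cohomology is tautological, which is the right lever, but you never turn it into a computation.

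The missing idea runs as follows. By Theorem~\ref{thm:CL} (since $2g-2+n\le 12$), the Betti numbers are the numbers $\dim RH^{2i}(\Mb_{g,n})$. These can be computed in the strata algebra with \texttt{admcycles}, for the Young subgroups $S_\lambda$, and then assembled via Lemma~\ref{lem:Young}. The corank of the matrix of Pixton's relations (possibly reduced mod $p$) gives an upper bound, the rank of the intersection pairing gives a lower bound, and the two agree for $i\le 5$ or $6$. The one or two remaining middle coefficients are fixed by Poincar\'e duality together with $e(\Mb_{g,n})$ and $\#\Mb_{g,n}(\F_2)$. The latter is transported by Getzler--Kapranov from Dragutinovi\'c's counts for $M_{5,n}$ and Huang--Kedlaya--Lau's count for $M_{6,1}$. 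Only afterwards is Getzler--Kapranov inverted to get $\chi_c^{S_n}(M_{g,n})$.

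Your Mukai-model fallback would need counts over roughly $3g-2+n$ fields, or a closed formula in $q$, and neither exists. Vanishing above the virtual cohomological dimension gives no constraint on the coefficients of $\chi_c$, since $H^k_c$ may carry any weight $\le k$.

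For $g=4$ the same problem appears in another form. The available data are counts over $\F_2,\F_3,\F_4$ plus the Euler characteristic, with no closed form in $q$. Yet each $s_\lambda$-coefficient of $\chi_c^{S_n}(M_{4,n})$ has up to $10+n$ unknown coefficients, and Poincar\'e duality cannot be applied to the open $M_{4,n}$. What is missing is the reduction to exactly four unknowns per partition:
\begin{enumerate}
\item Inductively in $n$, $\chi_c^{S_n}(\partial M_{4,n})$ is known. Poincar\'e duality on the smooth proper $\Mb_{4,n}$ then reduces everything to the contributions to $\chi_c^{S_n}(M_{4,n})$ of weight $\ge 9+n$.
\item Rewrite $\chi_c^{S_n}(M_{4,n})$ in terms of the $\chi_c(M_4,\V_\lambda)$; those with $|\lambda|=n$ are the only new terms and enter without Tate twist. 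The Deligne bound, $H^0(M_4,\V_\lambda)=0$ for $\lambda\ne 0$, and Hain's computation of $H^1(M_4,\V_\lambda)$ show that all weights $>16+n$ vanish.
\item This leaves four powers of $\L$ per $\lambda$, which the specializations $q=1,2,3,4$ determine.
\end{enumerate}

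For $n=7$ you also cannot take polynomiality of $\chi_c^{S_7}(\Mb_{4,7})$ as an input. It is part of Theorem~\ref{thm:Mbargn}, and $(4,7)$ lies outside Theorem~\ref{thm:CL} since $2g-2+n=13$. The paper proves that $H^k(\Mb_{4,7})$ is of Tate type for $16\le k\le 23$ using three ingredients: the sequence $H^{k-2}(\widetilde{\partial M}_{4,7})(-1)\to H^k(\Mb_{4,7})\to W_kH^k(M_{4,7})\to 0$, the Chow--K\"unneth generation property of $M_{4,7}$, and Tate type of the normalized boundary. Only then does the four-unknown argument go through, with the $S[12]$ terms entering through $\partial M_{4,7}$ as you anticipated.
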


Note that the result when $g=4$ exhausts and even goes beyond the region of polynomiality, but there remain open cases when $5\leq g\leq 8$. 

In contrast to Theorem \ref{thm:CLPW}, there is not yet a complete understanding of when the cohomology of $\Mb_{g,n}$ is of Tate type, although there are only finitely many pairs of $(g,n)$ for which it is not known if the cohomology of $\Mb_{g,n}$ is of Tate type \cite[Theorem 1.5]{CLPWFA}, \cite[Theorem 1.7]{CLPW}. 
\begin{rem} \label{smaller Eulers}
 The equivariant motivic Euler characteristic $\chi_c^{S_n}(\Mb_{0,n})$ is always a polynomial in $\L$ and can be computed 
 for instance using the Getzler--Kapranov formula. 
 In genus $1$, $\chi^{S_n}_c(\Mb_{1,n})$ is a polynomial in $\L$ if $n\leq 10$, and the polynomial has been computed. The same holds for $g=2$ and $n\leq 9$, $g=3$ and $n\leq 9$, and $g=4$ and $n\leq 3$. The outcomes can be found in the Github repository~\cite{CohomologyModuliSpaces}. \end{rem}
We extend these computations of $\chi_c^{S_n}(\Mb_{g,n})$ to the same values of $g$ and $n$ as in Theorem \ref{thm:Mgn}.

\begin{thm}\label{thm:Mbargn}
We compute $\chi_c^{S_n}(\Mb_{g,n})$ for $g=4$ and $4\leq n\leq 7$, $g=5$ and $n\leq 3$, and $g=6$ and $n\leq 1$. In these cases, $\chi^{S_n}_c(\Mb_{g,n})$ is a polynomial in $\L$.
\end{thm}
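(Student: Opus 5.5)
The plan is to deduce the statement from Theorem \ref{thm:Mgn} via the stratification of $\Mb_{g,n}$ by stable graphs. The boundary stratification expresses $\Mb_{g,n}$ as a disjoint union of locally closed strata $M_\Gamma = \bigl(\prod_{v} M_{g(v),n(v)}\bigr)/\mathrm{Aut}(\Gamma)$, indexed by stable graphs $\Gamma$ of genus $g$ with $n$ legs. Since $\chi_c$ is additive on stratifications, we get
\[\chi_c^{S_n}(\Mb_{g,n})=\sum_{\Gamma}\chi_c^{S_n}(M_\Gamma).\]
This sum is organized by Getzler--Kapranov's modular operad formalism: the generating series of the $\chi_c^{S_n}(\Mb_{g,n})$ is the plethystic exponential (the ``$\mathbb{M}$'' functor) of the generating series of the $\chi_c^{S_n}(M_{g,n})$, where each edge carries a factor $\L$ from the normal direction twist. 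The first step is therefore to implement this formula. In the computation for a fixed $(g,n)$ we only need the inputs $\chi_c^{S_{n'}}(M_{g',n'})$ with $g'\le g$ and $2g'+n'\le 2g+n$ (with $g'<g$ unless the graph is trivial).

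The second step is to check that all these inputs are known and polynomial. For a vertex of genus $g'$ with $n'$ half-edges in a stable graph of type $(g,n)$, we have $g'+h^1+\sum(\text{other genera})=g$, and the number of legs is bounded accordingly. For the listed cases ($g=4$, $n\le 7$; $g=5$, $n\le 3$; $g=6$, $n\le1$), the needed inputs fall into two groups. The first group is genus $0$, computed by Getzler. The second group is the cases with $1\le g'\le 3$ and $n'$ bounded; these are known and polynomial by the references cited before Theorem \ref{thm:Mgn} together with Remark \ref{smaller Eulers}, or else are the new cases of Theorem \ref{thm:Mgn} itself (e.g.\ $M_{4,n'}$ for $n'\le 3$ enters $M_{5,n}$ via a genus-$4$ vertex with a self-loop, requiring $n'\le n+2\le 5$, which is covered). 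I expect the bookkeeping that every needed $(g',n')$ is covered to be the main obstacle. For instance, $\Mb_{6,1}$ needs $M_{1,n'}$ with $n'$ as large as $11$ (a genus-$1$ vertex with many genus-$0$ loops or attached tails), and such inputs contain $S[12]$ and are not polynomial.

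The third step handles these non-polynomial inputs. The motives $S[k]$ coming from low-genus strata (e.g.\ $S[12]$ from $M_{1,11}$, $M_{2,n}$ terms, and $M_{4,7}$ itself) must cancel in the total sum. Cancellation is plausible but not automatic, so I would verify it by the explicit computation, which is how the polynomiality claim is established. Equivalently, one can use the known result that $H^{11}(\Mb_{1,11})$ is the only source of $S[12]$ for small $3g+2n$ and check via the Getzler--Kapranov sum that the coefficient of $S[12]$ (and of every other non-Tate class) vanishes in each listed case. Thus the proof reduces to a finite, computer-assisted evaluation of the graph sum with the inputs of Theorem \ref{thm:Mgn} and the earlier literature, using the Github data \cite{CohomologyModuliSpaces}. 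The result is the polynomial $\chi_c^{S_n}(\Mb_{g,n})$, and the vanishing of the non-Tate coefficients is observed directly.
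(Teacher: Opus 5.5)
\paragraph{Circularity.} Your proposal has a genuine gap: it is circular. Your graph sum includes the trivial graph, whose stratum is $M_{g,n}$ itself. So you need $\chi_c^{S_n}(M_{g,n})$ for exactly the new pairs $(4,4),\dots,(4,7)$, $(5,0),\dots,(5,3)$, $(6,0),(6,1)$. You also need other new cases as vertex inputs, for example $M_{4,4}$ and $M_{4,5}$ inside $\Mb_{5,3}$, and $M_{5,n'}$ with $n'\le 3$ inside $\Mb_{6,1}$. None of these are in the earlier literature. They are the content of Theorem \ref{thm:Mgn}, and the paper proves that theorem \emph{from} Theorem \ref{thm:Mbargn}, by exactly the Getzler--Kapranov step you propose; for $g=4$ the two are obtained together. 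Given the smaller cases, the passage between $\{\chi_c^{S_n}(M_{g,n})\}$ and $\{\chi_c^{S_n}(\Mb_{g,n})\}$ is unitriangular. It therefore only shows the two theorems are equivalent and produces no new information. The ``bookkeeping'' you identify as the main obstacle is the easy part. Separately, additivity of $\chi_c$ over the open strata $M_\Gamma\cong(\prod_v M_{g(v),n(v)})/\mathrm{Aut}(\Gamma)$ involves no factor of $\L$ per edge.

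\paragraph{What the paper does instead.} The missing ingredient is an independent computation of one side, and the paper always computes the compactified side directly. For $g\ge 5$, Theorem \ref{thm:CL} gives $H^\bullet(\Mb_{g,n})=RH^\bullet(\Mb_{g,n})$. Hence $\chi_c^{S_n}(\Mb_{g,n})=\sum_i[RH^{2i}(\Mb_{g,n})]\L^i$ is polynomial and palindromic for structural reasons. The low-codimension ranks are computed in \texttt{admcycles}, equivariantly via Lemma \ref{lem:Young}, by matching the upper bound from Pixton's relations with the lower bound from the rank of the intersection pairing. The remaining middle coefficients are fixed by the Euler characteristic \cite{Gorsky} and by $\F_2$-point counts of the open spaces \cite{Dragutinovic,KHL}, transported to $\Mb_{g,n}$ by Getzler--Kapranov. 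Only these numerical specializations, never full motivic classes, are pushed through the formula.

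For $g=4$ the paper inducts on $n$. By Proposition \ref{weight proposition}, which uses Deligne's weight bound, vanishing of $H^0$, Hain's theorem and Poincar\'e duality on $\Mb_{4,n}$, only four coefficients of each $\chi_c(M_4,\V_\lambda)$ with $|\lambda|=n$ are unknown. Polynomiality here comes from Theorem \ref{thm:CLPW}, and the four coefficients are solved from the data at $q=1,2,3,4$. For $n=7$, where $\chi_c^{S_7}(M_{4,7})$ contains $S[12]$, the paper must first prove that $H^\bullet(\Mb_{4,7})$ is Tate. It does so via the sequence $H^{k-2}(\widetilde{\partial M}_{4,7})(-1)\to H^k(\Mb_{4,7})\to W_kH^k(M_{4,7})\to 0$ and the Chow--K\"unneth generation property, because this is what allows the unknowns to be set up at all. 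It cannot be observed from cancellation in a graph sum whose top input is the unknown $\chi_c^{S_7}(M_{4,7})$.
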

The actual formulas are displayed in Section \ref{sec:gatleast5}, where the proof is given for the cases with $g\geq 5$, and in Section \ref{sec:genus4final} for the $g=4$ cases. They are also available in the Github repository \cite{CohomologyModuliSpaces}. 
\begin{rem}If $X$ is smooth and proper, then each Galois representation $H^i_c(X,\Q_{\ell})$ is pure of weight $i$, and hence $\chi_c^{S_n}(X)$ determines each  $H^i_c(X,\Q_{\ell})$ as an element of $\Gal^{S_n}_\Q$ up to semisimplification. Therefore, Theorem \ref{thm:Mbargn} determines the Betti numbers of $\Mb_{g,n}$ in the relevant cases. On the other hand, Theorem \ref{thm:Mgn} does not determine the Betti numbers of $M_{g,n}$ in those cases. See Remark \ref{rem:categorify} for a discussion of the extra information needed to compute the cohomology of $M_{g,n}$. \end{rem}
\subsection{Methods}


Consider the stratification of $\overline M_{g,n}$ by topological type. All strata are products of smaller moduli spaces $M_{g',n'}$ modulo a finite group. By ``smaller'' we mean here that $g' \leq g$, and $2g'-2+n' \leq 2g-2+n$. In particular, it follows that if you know $\chi_c^{S_{n'}}(M_{g',n'})$ for all such $g'$ and $n'$, then you can compute $\chi_c^{S_n}(\overline M_{g,n})$. Moreover, the quantities $\{\chi_c^{S_n}(M_{g,n})\}_{g,n}$ and $\{\chi_c^{S_n}(\overline M_{g,n})\}_{g,n}$ are related by a triangular matrix with ones along the diagonal, so that one may conversely reconstruct the motivic Euler characteristics of the uncompactified moduli spaces from those of the Deligne--Mumford compactifications.    

 If one wanted to implement this observation on a computer, one would need to write a program that enumerates all stable graphs of given genus and number of legs and their automorphism groups. A more systematic approach to this combinatorial problem was proposed by Getzler and Kapranov \cite{gk}. They write down generating series --- let's call them $A$ and $B$ --- which are the sum of all $S_n$-equivariant motivic Euler characteristics of all the moduli spaces $M_{g,n}$ (resp. $\overline M_{g,n}$). Then they give a simple and explicit operator in terms of symmetric functions that transforms $A$ into $B$, and vice versa; their formula has the effect of carrying out the above sum over stable graphs and taking coinvariants of their automorphism groups. Symmetric functions arise here as a convenient way of encoding the representation theory of the symmetric groups in characteristic zero. See Section \ref{sec:GetzlerKapranov} for more details.
 
 \begin{rem}\label{rem:categorify}
 The spectral sequence induced by the stratification by topological type and the weight spectral sequence for the normal crossings pair $(\Mb_{g,n}, \partial M_{g,n})$ lift the above observation about the relationship between Euler characteristics of $\Mb_{g,n}$ and $M_{g,n}$ to a relationship between cohomology groups. It is much more difficult to compute cohomology than Euler characteristics because computing cohomology involves studying all of the arrows of the spectral sequences, rather than just the entries. Recently, Bergström and Wennink have implemented the weight spectral sequence in the Sagemath package \texttt{admcycles} \cite{admcycles} and used it to compute the cohomology of $M_5$ \cite{BergstromWennink}.
 \end{rem}

The first applications of the Getzler--Kapranov formula computed $\chi_c^{S_n}(\Mb_{g,n})$ for small $g$ and $n$ by first computing $\chi_c^{S_n}(M_{g',n'})$ for ``smaller" $g',n'$ \cite{GetzlerGenus0,semiclassical,GL,BT}. In more recent works, the flow of information often goes both ways; see for example \cite{BergstromFaberPayne,bcgp}. In this paper, too, we take the reversed perspective, and use the Getzler--Kapranov formula to compute the motivic Euler characteristic of $M_{g,n}$ from data about Deligne--Mumford compactifications. That is, we use Theorem~\ref{thm:Mbargn} and the Getzler--Kapranov formula to prove Theorem \ref{thm:Mgn}. What makes this possible is a recent result of Canning--Larson \cite{CL}, which implies that in a significant range of values of $g$ and $n$, computing the cohomology of $\overline M_{g,n}$ is in fact \emph{much easier} than computing the cohomology of $M_{g,n}$.  

Let $RH^{\bullet}(\Mb_{g,n})$ denote the tautological subring of $H^{\bullet}(\Mb_{g,n})$ (see Section \ref{sec:tautological} for definitions).
\begin{thm}\label{thm:CL} If $g \geq 3$ and $2g-2+n \leq 12$, then $H^\bullet(\overline M_{g,n},\Q) =  RH^{\bullet}(\overline M_{g,n})$. The same holds for $g=2$ and $n \leq 9$, $g=1$ and $n \leq 10$, and $g=0$ and any $n$. 
\end{thm}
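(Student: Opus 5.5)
This is a result of Canning--Larson; the plan is the standard inductive argument through the boundary stratification. The central tool is the following lemma. Suppose that for every stable graph $\Gamma$ of type $(g,n)$ and every vertex $v$ of $\Gamma$, the cohomology $H^\bullet(\Mb_{g(v),n(v)})$ is tautological. Suppose moreover that the \emph{open} part $H^\bullet(M_{g,n})$ has the property that its pure-weight part $W_kH^k(M_{g,n})$ is tautological, i.e.\ spanned by restrictions of tautological classes (equivalently, $H^k(\Mb_{g,n}) \to H^k(M_{g,n})$ has tautological image). Then $H^\bullet(\Mb_{g,n}) = RH^\bullet(\Mb_{g,n})$.

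To prove the lemma I would use the exact sequence
\[
H^{k-2}(\widetilde{\partial M_{g,n}})(-1) \to H^k(\Mb_{g,n}) \to W_kH^k(M_{g,n}) \to 0,
\]
where $\widetilde{\partial M_{g,n}}$ is the normalization of the boundary, which is a disjoint union of quotients of $\Mb_{g_1,n_1+1}\times \Mb_{g_2,n_2+1}$ and $\Mb_{g-1,n+2}$. The kernel of restriction is the image of the Gysin pushforward. By the Künneth decomposition, and using that the factors have only tautological cohomology (the inductive hypothesis, together with vanishing of odd cohomology), pushforwards of products of tautological classes are tautological. Hence every class in $H^k(\Mb_{g,n})$ is tautological modulo a class whose restriction to $M_{g,n}$ is tautological.

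Next I would set up the induction on $(g,n)$ ordered by dimension, so that all boundary factors are smaller. One checks that the region in the statement is closed under taking boundary factors: $g'\le g$, $2g'-2+n'\le 2g-2+n$, with the genus-$1$ and genus-$2$ ranges fitting since $2g-2+n\le 12$. A genus-$1$ factor has $n'\le 2\cdot 1-2+n'\le 12$, which requires care. The genus-$1$ factors that appear satisfy $n'\le 10$ because the rest of the graph takes up $2g''-2$ with $g''\ge 2$ when $g\ge 3$; edge cases like $\Mb_{1,11}$ must be excluded, which is exactly why the bound $12$ is sharp ($S[12]$ appears). Genus $0$ is Keel's theorem. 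The cases $g=1$, $n\le 10$ and $g=2$, $n\le 9$ are known by Petersen and Petersen--Tommasi/Canning--Larson, and I would cite or reprove them by the same lemma.

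The main obstacle is the input about the open part: showing that $W_kH^k(M_{g,n})$ is tautological, indeed often zero, for $g\ge 3$ in this range. My approach would be to combine vanishing results and point counts for $M_{g,n}$ in low degree with Chow-ring computations. Concretely, I would use that the Chow groups of $M_{g,n}$ for $g\le 9$ are tautological (Canning--Larson's work on Hurwitz spaces and low-genus Chow rings) and that the cycle class map is surjective in this range for pure-weight cohomology. For example, even pure cohomology is handled by a stratification of $M_{g,n}$ by gonality into pieces with affine or Tate stratifications, and odd pure cohomology is excluded by absence of holomorphic forms (no cusp forms of small weight, as $\Mb_{1,n}$ with $n\le 10$ has $H^{11}=0$). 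I expect this step, controlling pure weight on the open part for $g$ up to $7$ uniformly in $n$ with $2g-2+n\le12$, to be the hard part.
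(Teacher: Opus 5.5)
The paper does not prove this statement; it quotes it from Canning--Larson, with Keel for $g=0$ and Belorousski for $g=1$. Your architecture has the right shape: the sequence $H^{k-2}(\widetilde{\partial M}_{g,n})(-1)\to H^k(\Mb_{g,n})\to W_kH^k(M_{g,n})\to 0$, plus an input saying that $W_kH^k(M_{g,n})$ is spanned by tautological cycle classes. This is how the paper itself argues for $\Mb_{4,7}$, with the Chow--K\"unneth generation property supplying the input via \cite[Lemma 9.11]{CL} and \cite[Lemma 4.3]{CLP}.

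Your induction, however, has a genuine gap. The range is \emph{not} closed under passing to vertex spaces. Your justification (``the rest of the graph takes up $2g''-2$ with $g''\geq 2$'') forgets self-loops, which carry genus at the low-genus vertex itself. Concretely:
the normalization of $\Delta_{\mathrm{irr}}\subset\Mb_{2,9}$ is $\Mb_{1,11}/S_2$;
the normalization of $\Delta_{\mathrm{irr}}\subset\Mb_{3,8}$ is $\Mb_{2,10}/S_2$;
a genus-one vertex with two loops in $\Mb_{3,8}$ gives $\Mb_{1,12}$.
None of $\Mb_{1,11}$, $\Mb_{1,12}$, $\Mb_{2,10}$ has tautological cohomology. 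Indeed $H^{11}(\Mb_{1,11})\cong S[12]$. For $\xi\colon\Mb_{1,11}\times\Mb_{1,1}\to\Mb_{2,10}$ and $0\neq\alpha\in H^{11}(\Mb_{1,11})$, the class $\xi^*\xi_*(\alpha\otimes 1)$ has nonzero K\"unneth component $-\alpha\otimes\psi$, so $H^{13}(\Mb_{2,10})\neq 0$. In fact every $(g,n)$ in the range with $g\geq 2$ and $2g-2+n\in\{11,12\}$ has such a vertex: the single genus-one vertex with $g-1$ loops gives $\Mb_{1,2g-2+n}$. This includes $\Mb_{5,3}$ and $\Mb_{6,1}$ from this paper. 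So your lemma never applies in those cases. This is also why the ranges in the statement are not uniform: $(1,11)$, $(1,12)$ and $(2,10)$ satisfy $2g-2+n\leq 12$ and are excluded, yet spaces whose strata contain them are included.

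The missing idea is equivariance. The Gysin map from a stratum factors through the $\mathrm{Aut}(\Gamma)$-invariants of $H^\bullet(\Mb_\Gamma)$. The offending classes come from $H^{11}(\Mb_{1,11})$, on which $S_{11}$ acts by the sign character. You must check that, in the allowed range, every occurrence of a bad vertex comes with automorphisms that kill these classes: swapping the two half-edges of a loop, or swapping two parallel edges. For example, the image of $H^{11}(\Mb_{1,11})$ in $H^{13}(\Mb_{2,10})$ is sign-isotypic for $S_{10}$. It therefore dies in $\Mb_{2,10}/S_2$, although it survives in $\Mb_{2,10}$. Both your lemma and the induction must be run equivariantly, with hypotheses only on the relevant invariant parts.

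Separately, the input on the open part is only gestured at, and the specific suggestions do not work:
a stratification with affine or Tate pieces does not show that $W_kH^k(M_{g,n})$ consists of tautological classes;
the cusp-form remark is a genus-one statement and does not rule out odd pure-weight classes on $M_{g,n}$ for $g\geq 3$;
the genus $\leq 9$ Chow result is for unmarked $M_g$.
What is actually needed is that $A^\bullet(M_{g,n})$ is tautological for the relevant $(g,n)$, together with surjectivity of the cycle class map onto $W_kH^k(M_{g,n})$. The second ingredient is what the Chow--K\"unneth generation property provides.
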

The genus $0$ result is due to Keel \cite{Keel} and the genus $1$ result is due to  Belorousski \cite{Belorousski}. Several other cases for small values of $g$ and $n$ were also known previously \cite{Mumford, Faber1, Faber2}. By definition, the tautological ring is generated by algebraic cycles, and so $H^{\bullet}(\Mb_{g,n})$ is of Tate type for the pairs $(g,n)$ in Theorem \ref{thm:CL}. The computation of $\chi_c(\Mb_{g,n})$ thus reduces to a computation of the tautological ring.

The tautological ring $RH^{2\bullet}(\Mb_{g,n})$ has an explicit set of generators called decorated strata, which span a graded $\Q$-algebra called the strata algebra $\mathsf S^\bullet(\Mb_{g,n})$. The algebra multiplication is defined formally to agree with that of $RH^{2\bullet}(\Mb_{g,n})$, so there is map of graded $\Q$-algebras 
\[
\S^\bullet(\Mb_{g,n})\rightarrow RH^{2\bullet}(\Mb_{g,n}).
\]
This map factors through the tautological Chow ring $R^\bullet(\Mb_{g,n})$, and
 Pixton, see \cite{PPZ}, has conjectured a generating set for the ideal of relations $\mathsf{I}^{\bullet}(\Mb_{g,n})$, the kernel of the map to $R^\bullet(\Mb_{g,n})$. We call the ideal generated by Pixton's relations $\FZ^{\bullet}(\Mb_{g,n})$.\footnote{Pixton's relations are also known as the generalized Faber--Zagier relations, hence the notation.} Pandharipande, Pixton, and Zvonkine \cite{PPZ} proved that $\FZ^{\bullet}(\Mb_{g,n})$ maps to zero in $RH^{\bullet}(\Mb_{g,n})$.
 
 We do not discuss the details of Pixton's relations $\FZ^{\bullet}(\Mb_{g,n})$ here. Instead, we note that the strata algebra $\S^{\bullet}(\Mb_{g,n})$ and Pixton's relations $\FZ^{\bullet}(\Mb_{g,n})$ are implemented in the software package \texttt{admcycles} \cite{admcycles}. 
For a fixed $(g,n,i)$, we can thus use \texttt{admcycles} to obtain an upper bound for $\dim RH^{2i}(\Mb_{g,n})$. We also use \texttt{admcycles} to obtain a lower bound via the intersection pairings
\[
\S^i(\Mb_{g,n})\times \S^{3g-3+n-i}(\Mb_{g,n})\rightarrow RH^{6g-6+2n}(\Mb_{g,n})\cong \Q.
\]
If the upper and lower bounds agree, then we have successfully computed $[RH^{2i}(\Mb_{g,n})]\in K_0(\Gal_\Q)$. Moreover, we can even calculate $[RH^{2i}(\Mb_{g,n})]\in K_0(\Gal_{\Q}^{S_n})$, as \texttt{admcycles} allows us to calculate $S_n$-equivariantly. See Section \ref{sec:admcycles} for details. Doing these computations becomes quite difficult when $g$, $n$, and $i$ increase, as the size of the strata algebra and the set of relations grows significantly.

To avoid these more difficult computations, we use known results about the Euler characteristic and point counts over small finite fields to impose linear relations among the coefficients of $\chi^{S_n}_c(\Mb_{g,n})$ \cite{Dragutinovic, KHL}. 

Combining these methods suffices to prove Theorem \ref{thm:Mbargn}. Using the Getzler--Kapranov method, we then prove Theorem \ref{thm:Mgn}.

\subsection{Beyond Theorems \ref{thm:Mgn} and \ref{thm:Mbargn}}
In principle, the methods of this paper could compute $\chi_c^{S_n}(M_{g,n})$ for the $6$ remaining polynomial pairs $(g,n)$. In practice, there are computational obstructions to doing so because of the size of the strata algebra $\S^i(\Mb_{g,n})$ for $i$ large. We have made some partial calculations. For example, using the method briefly explained above and fully explained in Section \ref{sec:admcycles}, we have computed every term of the polynomial $\chi_c(\Mb_7)$ except for the terms of degrees $8$, $9$, and $10$:
\[\chi_c(\Mb_7)=1 + 5 \L + 32 \L^2 + 162 \L^3 + 695 \L^4 + 2388 \L^5 +  6498 \L^6 +  13539 \L^7 + ? \L^8 + ?\L^9 + ?\L^{10} +\dots\]
Note that the coefficient of degree $8$ and coefficient of degree $10$ are the same by Poincar\'e duality, and the elided terms in the above expression are also determined by Poincar\'e duality, so there are only two remaining unknowns. The Euler characteristic provides one linear relation among the coefficients, but at present there is no point counting data to provide another linear relation. We have similar calculations for other unknown cases, such as $\chi_c^{S_4}(\Mb_{5,4})$ and $\chi_c^{S_2}(\Mb_{6,2})$. If we could compute these three motivic Euler characteristics, then we could also compute $\chi_c^{S_4}(M_{5,4})$, $\chi_c^{S_2}(M_{6,2})$, and $\chi_c(M_7)$.

Computational improvements in the methods of Section \ref{sec:admcycles} and more point counting data over small finite fields would both help to push these results further.

\subsection*{Acknowledgments}
We thank Carel Faber for sharing his point counting data in genus $4$ with us. We also thank Steve Huang, Kiran Kedlaya, Jun Bo Lau, Sam Payne, and Thomas Willwacher for helpful conversations. The computer calculations in this paper were carried out on the servers of ETH Z\"urich. We thank the IT support group for helping to facilitate these calculations.

\subsection*{AI Declaration}
Artificial intelligence was not used for the mathematics, writing, computation, or any other part of this paper.

\section{$S_n$-equivariant motivic Euler characteristics}
We discuss some abstract properties of $S_n$-equivariant motivic Euler characteristics and the Getzler--Kapranov method.



\subsection{Symmetric functions}\label{section: symmetric functions} On the level of Grothendieck groups, there is a decomposition \cite[Theorem 3.2]{getzler-mixedhodge}
\[ K_0(\Gal^{S_n}_\Q) \cong K_0(\mathrm{Rep}(S_n)) \otimes_\Z K_0(\Gal_\Q)  \cong  \Lambda_n \otimes_\Z K_0(\Gal_\Q),\]
where $\Lambda_n$ is the degree $n$ component of the ring of symmetric functions. We write $s_\lambda$ for the \emph{Schur polynomial} in $\Lambda_n$ associated to the partition $\lambda$ of $n$ and $p_i$ the $i$th \emph{power sum}. Under the isomorphism $K_0(\mathrm{Rep}(S_n)) \cong \Lambda_n$, it corresponds to the class of the representation $\sigma_\lambda$ of $S_n$ associated to the partition $\lambda$. For each $i$, we have a decomposition into isotypical components
\[
H^i_c(X,\Q_{\ell}) = \bigoplus_{\lambda \vdash n} \sigma_{\lambda}\boxtimes V_{i,\lambda},
\]
and there is an equality
\[ \chi_c^{S_n}(X) = \sum_{i=0}^{2d} (-1)^i \sum_{\lambda \vdash n} s_\lambda \otimes [V_{i,\lambda}].\]
We let $\langle-,-\rangle$ be the standard inner product on $\Lambda_n$, for which the Schur polynomials form an orthonormal basis \cite[Section I.4]{macdonald}. Hence
\[ \langle \chi_c^{S_n}(X), s_\lambda\rangle = \sum_{i=0}^{2d} (-1)^i [V_{i,\lambda}].\]
The equivariant motivic Euler characteristic can be expressed in terms of the non-equivariant one as follows. If $\lambda$ is a partition of $n$, then we write $S_\lambda$ for the associated Young subgroup of $S_n$. Explicitly, $S_\lambda = S_{\lambda_1} \times S_{\lambda_2} \times \dots \times S_{\lambda_k}$, where $k=l(\lambda)$. 
\begin{lem}\label{lem:Young}
Let $m_\lambda$ be the monomial symmetric polynomial associated to the partition $\lambda$. Then 
\[ \chi_c^{S_n}(X) = \sum_{\lambda \vdash n} m_\lambda \otimes \chi_c(X/S_\lambda).\]

\end{lem}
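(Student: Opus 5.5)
Since $s_\lambda$ and $m_\lambda$ are dual bases of $\Lambda_n$ (up to swapping with $h_\lambda$), the plan is to compute $\langle \chi_c^{S_n}(X), h_\lambda\rangle$ and identify it with $\chi_c(X/S_\lambda)$. Indeed, the complete homogeneous symmetric functions $h_\lambda$ and the monomial symmetric functions $m_\lambda$ are dual with respect to $\langle-,-\rangle$ \cite[Section I.4]{macdonald}. Hence for any $F\in \Lambda_n\otimes K_0(\Gal_\Q)$ we have $F=\sum_\lambda m_\lambda\otimes\langle F,h_\lambda\rangle$. It therefore suffices to show that $\langle \chi_c^{S_n}(X), h_\lambda\rangle = \chi_c(X/S_\lambda)$ in $K_0(\Gal_\Q)$.

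Next, $h_\lambda = h_{\lambda_1}\cdots h_{\lambda_k}$ corresponds under $K_0(\mathrm{Rep}(S_n))\cong\Lambda_n$ to the induced representation $\mathrm{Ind}_{S_\lambda}^{S_n}\mathbf 1$, the permutation representation on $S_n/S_\lambda$. By the isotypic decomposition of $H^i_c$, pairing with a class $[W]$ of an $S_n$-representation computes $\sum_i(-1)^i[\mathrm{Hom}_{S_n}(W,H^i_c(X))]$, using that $\langle s_\lambda,s_\mu\rangle=\delta_{\lambda\mu}$ and that $\mathrm{Hom}_{S_n}(\sigma_\mu,\sigma_\lambda\boxtimes V)=\delta_{\lambda\mu}V$. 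By Frobenius reciprocity, $\mathrm{Hom}_{S_n}(\mathrm{Ind}_{S_\lambda}^{S_n}\mathbf 1, H^i_c(X)) = H^i_c(X)^{S_\lambda}$, compatibly with the Galois action since the $S_n$- and Galois actions commute.

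Finally, I would invoke the standard fact that for a finite group $G$ acting on a separated DM stack $X$ of finite type, with rational ($\Q_\ell$) coefficients, $H^i_c(X/G)\cong H^i_c(X)^G$ Galois-equivariantly. This holds because $\pi\colon X\to X/G$ is finite (or representable proper), so $\pi_*$ is exact with $(\pi_*\Q_\ell)^G=\Q_\ell$, and taking invariants is exact in characteristic zero. Taking alternating sums then gives $\langle\chi_c^{S_n}(X),h_\lambda\rangle=\chi_c(X/S_\lambda)$, completing the argument.

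The only step needing care is this last identification of invariants with the cohomology of the stack quotient. I expect it to be routine: the quotient here is the stacky quotient, and for DM stacks with $\Q_\ell$-coefficients the invariant-taking argument goes through verbatim.
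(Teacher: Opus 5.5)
Your proposal is correct and follows essentially the same route as the paper: the duality of $h_\lambda$ and $m_\lambda$, Frobenius reciprocity applied to $h_\lambda=[\operatorname{Ind}_{S_\lambda}^{S_n}\Q]$, and the identification of $S_\lambda$-invariants of $H^i_c(X)$ with $H^i_c(X/S_\lambda)$. You are in fact a bit more careful than the paper. You keep track of the Galois action via $\operatorname{Hom}_{S_n}$ rather than writing $\dim$, and you justify $H^i_c(X/G)\cong H^i_c(X)^G$, which the paper uses without comment.
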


\begin{proof}Let $h_\lambda$ be the complete homogeneous symmetric polynomial associated to the partition $\lambda$. Since the Schur polynomials are an orthonormal basis with respect to $\langle-,-\rangle$, Schur's lemma implies that the inner product satisfies 
$$ \langle [V], [W] \rangle = \dim \mathrm{Hom}_{S_n}(V,W)$$
for $V,W \in \mathrm{Rep}(S_n)$. It then follows by Frobenius reciprocity that 
\[ \langle h_\lambda, [W] \rangle = \dim \left(W^{S_\lambda}\right),\]
since $h_\lambda = [\operatorname{Ind}_{S_\lambda}^{S_n} \Q]$, where $\Q$ denotes the trivial one-dimensional representation of $S_\lambda$.  
    Hence $\langle h_\lambda, [H^i_c(X,\Q_\ell)]\rangle = \dim H^i_c(X/S_\lambda,\Q_\ell)$ and $\langle h_\lambda,\chi_c^{S_n}(X) \rangle = \chi_c(X/S_\lambda)$. The result follows from this fact, and that $m_\lambda$ and $h_\lambda$ are dual bases of $\Lambda_n$ with respect to $\langle-,-\rangle$  \cite[Section I.4]{macdonald}.
\end{proof}

\subsection{Tate motives} Recall that $\L=[H^2(\mathbb{P}^1,\Q_{\ell})]\in K_0(\Gal_\Q)$. 
Here, we are focused on stacks $X$ whose compactly supported cohomology is of Tate type; that is, stacks $X$ such that  $[H^i_c(X,\Q_{\ell})]$ is a sum of powers of $\L$ for all $i$. Then also each coefficient of $s_{\lambda}$ in $\chi_c^{S_n}(X)$ is a polynomial in $\L$, when $X$ carries an action of $S_n$.

\begin{lem}\label{lem:specializations}
    Suppose that $X$ is such that $\chi_c(X) = f(\L)$ is a polynomial in $\L$. Then
    \begin{enumerate}
        \item The Euler characteristic $e(X)$ is given by $f(1)$.
        \item Let $p$ be a prime, and $t$ a positive integer. If $X$ is defined over $\Z_{(p)}$ and has good reduction mod $p$, then  the number $\# X(\mathbf F_{p^t})$ of $\mathbf F_{p^t}$-points of $X$ is given by $f(p^t)$.
    \end{enumerate}
\end{lem}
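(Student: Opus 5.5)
Both parts come from applying a ring homomorphism out of $K_0(\Gal_\Q)$ to the identity $\chi_c(X)=f(\L)$. Since $f$ is a polynomial with integer coefficients and $\L$ is a class in $K_0(\Gal_\Q)$, any ring homomorphism $\phi\colon K_0(\Gal_\Q)\to\Z$ (or to $\Q_\ell$) sends $f(\L)$ to $f(\phi(\L))$. So it suffices to do two things: pick the right homomorphism for each part, identify its value on $\chi_c(X)$, and compute its value on $\L$.

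For (1), we use the dimension homomorphism $\dim\colon K_0(\Gal_\Q)\to\Z$. It is additive on short exact sequences and multiplicative on tensor products, so it is a ring map. It sends $\chi_c(X)$ to $\sum_i(-1)^i\dim H^i_c(X_{\Qbar},\Q_\ell)$, the compactly supported Euler characteristic. This agrees with the ordinary Euler characteristic $e(X)$, as is standard, e.g. via comparison with singular cohomology and the equality of compactly supported and ordinary Euler characteristics for complex varieties and DM stacks with rational coefficients. Since $\L$ is one-dimensional, $\dim\L=1$, and hence $e(X)=f(1)$.

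For (2), we fix a prime $\ell\neq p$. Good reduction over $\Z_{(p)}$ means $X$ extends to a suitable (e.g. smooth proper, or the complement of a relative normal crossings divisor in such) stack $\mathcal X$ over $\Z_{(p)}$. The inertia group at $p$ then acts trivially on $H^i_c(X_{\Qbar},\Q_\ell)$, and these groups are identified with $H^i_c(\mathcal X_{\overline{\F}_p},\Q_\ell)$ compatibly with Frobenius, via smooth and proper base change. So the map $[V]\mapsto\operatorname{tr}(\mathrm{Frob}_p^t\mid V)$ is well defined on the subgroup of $K_0(\Gal_\Q)$ spanned by representations unramified at $p$. It is a ring homomorphism there, since trace is additive in short exact sequences and multiplicative under tensor products, and all classes involved lie in this subgroup. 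The Grothendieck–Lefschetz trace formula for Deligne–Mumford stacks (Behrend) gives
\[\#X(\F_{p^t})=\sum_i(-1)^i\operatorname{tr}(\mathrm{Frob}_{p^t}\mid H^i_c(\mathcal X_{\overline{\F}_p},\Q_\ell)),\]
where points are counted with weights $1/|\mathrm{Aut}|$, which is the convention for stacks. Finally, geometric Frobenius acts on $H^2(\P^1)$ by $p^t$, so $\L\mapsto p^t$, and therefore $\#X(\F_{p^t})=f(p^t)$.

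The only real subtlety is in (2): making precise what good reduction means and that it yields unramifiedness together with the specialization isomorphism for $H_c$. For the stacks in this paper, $M_{g,n}$ and $\Mb_{g,n}$ are smooth over $\Z$ and the boundary is a relative normal crossings divisor, so the standard base change results apply. I would take this as the definition of good reduction and cite the trace formula for stacks rather than reprove it.
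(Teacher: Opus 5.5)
Your proposal is correct and follows the paper's proof essentially verbatim. For part (1) you use the dimension homomorphism $K_0(\Gal_\Q)\to\Z$ with $\L\mapsto 1$, together with the equality of compactly supported and ordinary Euler characteristics (which the paper cites from Laumon). For part (2) you use the Frobenius-trace homomorphism on representations unramified at $p$ with $\L\mapsto p^t$, the Grothendieck--Lefschetz trace formula, and good reduction to compare cohomology over $\Qbar$ and $\overline{\mathbf F}_p$.
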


\begin{proof}There is a ring homomorphism $K_0(\Gal_\Q) \to \Z$ defined by sending an $\ell$-adic Galois representation to its dimension. This homomorphism maps $\L$ to $1$, and it also maps $\chi_c(X)$ to the compactly supported Euler characteristic of $X$, by definition. This coincides with the ordinary Euler characeristic by \cite{laumon}.

    Let $\Gal'_\Q$ denote the category of $\ell$-adic Galois representations unramified at $p$. There is a similar homomorphism $K_0(\Gal'_\Q) \to \Z$ which sends an $\ell$-adic Galois representation $V$ to $\operatorname{Tr}(\operatorname{Frob}_{p^t} |\, V)$. It maps $\L$ to $p^t$, and sends $\chi_c(X)$ to the number of $\mathbf F_{p^t}$-points, using the Grothendieck--Lefschetz trace formula and in addition the good reduction hypothesis to identify \'etale cohomology over $\Qbar$ and $\overline{\mathbf{F}}_p$.
\end{proof}

\subsection{The Getzler--Kapranov method}\label{sec:GetzlerKapranov}

A systematic approach to passing between the two sets of data
\begin{equation}\label{eq:correspondenceGK}
    \{ \langle s_\lambda,\chi_c^{S_n}(\overline M_{g,n})\rangle \}  \longleftrightarrow  
    \{\langle s_\lambda,\chi_c^{S_n}(M_{g,n})\rangle\} \end{equation} 
was developed by Getzler and Kapranov \cite{gk}. 
Let
 $$ A = \sum_{g,n} \hbar^{g-1} \chi_c^{S_n}(M_{g,n}) \qquad \text{and} \qquad B = \sum_{g,n} \hbar^{g-1} \chi_c^{S_n}(\overline M_{g,n}), $$
 where $\hbar$ is an indeterminate. 
 The \emph{Getzler--Kapranov formula} reveals a striking symmetry in the situation:
 $$ A = \mathbf{Log}\big( \exp(\boldsymbol{\Delta})\, \mathbf{Exp}(B)\big ) \qquad \iff \qquad B = \mathbf{Log}\big( \exp(-\boldsymbol{\Delta})\, \mathbf{Exp}(A)\big ). $$
 Here $\mathbf{Exp}$, $\mathbf{Log}$, and $\boldsymbol\Delta$ are the ``plethystic exponential'', ``plethystic logarithm'', and ``plethystic Laplacian'', respectively. These are universal operations acting on complete $\lambda$-rings, and we are using that both generating series $A$ and $B$ live in a completion of $\Lambda \otimes K_0(\Gal_\Q)$, where $\Lambda = \bigoplus \Lambda_n$ is the ring of symmetric functions, and that $\Lambda \otimes K_0(\Gal_\Q)$ comes with a natural $\lambda$-ring structure. From a highbrow perspective, the symmetry in the Getzler--Kapranov formula is a reflection of the self-duality of the Feynman transform.
 
The details of the formula are explained in \cite{gk}, but what is most important to us are the following two facts. First, the formula exists and can be efficiently implemented using e.g. SageMath's methods for computations with symmetric functions. Second, for a fixed pair $(g,n)$ computing $\chi_c^{S_n}(\Mb_{g,n})$ (respectively, $\chi_c^{S_n}(M_{g,n})$) requires only the data of $\chi_c^{S_{n'}}(M_{g',n'})$ (respectively, $\chi_c^{S_{n'}}(\Mb_{g',n'})$) for $0<2g'-2+n'\leq 2g-2+n$ and $g'\leq g$. With these facts, we can prove Theorem \ref{thm:Mgn}, assuming Theorem \ref{thm:Mbargn}.
 \begin{proof}[Proof of Theorem \ref{thm:Mgn}, assuming Theorem \ref{thm:Mbargn}]
     For any pair $(g,n)$ in the statement, $\chi^{S_n}_c(\Mb_{g',n'})$ has been computed for all $g',n'$ such that $0<2g'-2+n'\leq 2g-2+n$ and $g'\leq g$ by Theorem \ref{thm:Mbargn} and previous calculations of several authors, which can be found at \cite{CohomologyModuliSpaces}. Applying the Getzler--Kapranov method yields the result.
 \end{proof}

\section{Cohomology of $\Mb_{g,n}$}
It remains to prove Theorem \ref{thm:Mbargn}. Here, we explain the methods to do so when $g\geq 5$. The $g=4$ case uses different methods, explained in Section \ref{sec:genus4}.
\subsection{The tautological ring}\label{sec:tautological}
For a stable graph $\Gamma$ of genus $g$ with $n$ legs and vertex set $V$, we set \[\Mb_{\Gamma}=\prod_{v\in V} \Mb_{g(v),n(v)}.\]
There is a proper gluing morphism
\[
\xi_{\Gamma}:\Mb_{\Gamma}\rightarrow \Mb_{g,n}
\]
whose image is the closure of the locus of curves with dual graph $\Gamma$. Let $\pi:\Mb_{g,n+1}\rightarrow \Mb_{g,n}$ be the universal curve and $s_i:\Mb_{g,n}\rightarrow \Mb_{g,n+1}$ the universal sections. The $\psi$ classes on $\Mb_{g,n}$ are defined as
\[
\psi_i=c_1(s_i^*\omega_{\pi})\in \mathsf{A}^1(\Mb_{g,n}).
\]
The Arbarello--Cornalba $\kappa$ classes are 
\[
\kappa_j = \pi_*(\psi_{n+1}^{j+1})\in \mathsf{A}^j(\Mb_{g,n}).
\]
Here, $\mathsf{A}^*(\Mb_{g,n})$ denotes the Chow ring of $\Mb_{g,n}$ with rational coefficients.
Let $\pi_v:\Mb_{\Gamma}\rightarrow \Mb_{g(v),n(v)}$ be the projection map. A \emph{decoration} on $\Gamma$ is a product of monomials in $\psi$ and $\kappa$ classes of total degree at most $3g(v)-3+n(v)$ pulled back from the moduli spaces associated to the vertices by $\pi_v$. Let $\S^\bullet(\Mb_{g,n})$ be the $\Q$-vector space whose basis elements are pairs $[\Gamma,\gamma]$, where $\Gamma$ is a stable graph of genus $g$ with $n$ legs and $\gamma$ is a decoration. The vector space $\S^\bullet(\Mb_{g,n})$ is finite dimensional and graded:
\[
\deg\ [\Gamma,\gamma] = |E(\Gamma)|+\deg(\gamma),
\]
where $E(\Gamma)$ is the set of edges of $\Gamma$.
It also has the structure of a $\Q$-algebra, with the product determined by the intersection theory of the boundary strata of $\Mb_{g,n}$, see \cite[Appendix]{GraberPandharipande} for details. The product respects the grading, and so $\S^\bullet(\Mb_{g,n})$ is a graded algebra called the \emph{strata algebra}. Elements in the strata algebra give rise to Chow classes by pushforward under gluing morphisms $\Mb_{\Gamma}\rightarrow \Mb_{g,n}$ and cohomology classes via the degree doubling cycle class map:
\begin{equation}\label{eq:stratamap}
\S^\bullet(\Mb_{g,n})\rightarrow \mathsf{A}^{\bullet}(\Mb_{g,n})\rightarrow H^{2\bullet}(\Mb_{g,n}).
\end{equation}
 The image of \eqref{eq:stratamap} is by definition the (cohomological) tautological ring $RH^{2\bullet}(\Mb_{g,n})$. The kernel of \eqref{eq:stratamap} is called the ideal of tautological relations $\mathsf{I}^{\bullet}(\Mb_{g,n})$. Pixton \cite[Conjecture 5.2]{PixtonPHD} has conjectured a generating set for the ideal of relations $\mathsf{I}^{\bullet}(\Mb_{g,n})$. We call the ideal generated by Pixton's relations $\FZ^{\bullet}(\Mb_{g,n})$. Pandharipande, Pixton, and Zvonkine \cite{PPZ} proved that $\FZ^{\bullet}(\Mb_{g,n})\subset \mathsf{I}^{\bullet}(\Mb_{g,n})$, and Pixton's conjecture is that this containment is in fact an equality.

\subsection{Calculating $RH^{2i}(\Mb_{g,n})$ using \texttt{admcycles}}\label{sec:admcycles}
We do not discuss the details of Pixton's relations $\FZ^{\bullet}(\Mb_{g,n})$ here. Instead, we note that the strata algebra $\S^{\bullet}(\Mb_{g,n})$ and Pixton's relations $\FZ^{\bullet}(\Mb_{g,n})$ are implemented in the SageMath software package \texttt{admcycles} \cite{admcycles}. 
For fixed $(g,n,i)$, we can use \texttt{admcycles} to obtain an upper bound for $\dim RH^{2i}(\Mb_{g,n})$. First, the program enumerates a basis for $\S^{i}(\Mb_{g,n})$. Then, it creates a matrix $A^i_{g,n}$ whose columns correspond to the basis vectors for $\S^{i}(\Mb_{g,n})$ and whose rows correspond to a generating set of $\FZ^{i}(\Mb_{g,n})$. Then 
\[
\#\mathrm{col}(A^i_{g,n})-\rank(A^i_{g,n}) \geq \dim RH^{2i}(\Mb_{g,n}),
\]
and equality holds if Pixton's conjecture is true.

In practice, the matrix $A^i_{g,n}$ can be very large,\footnote{For the calculations done in this paper, the size can be on the order of hundreds of thousands of rows and columns.} and so it can be difficult to compute the rank. It is easier to reduce the matrix $A^i_{g,n}$ modulo $p$ for some prime $p$ and then compute the rank of the reduced matrix. This reduction is possible so long as $p$ does not divide the denominators of any entry of $A^i_{g,n}$. We denote the reduced matrix by $A^i_{g,n}(p)$. We have
\[
\rank(A^i_{g,n})\geq \rank(A^i_{g,n}(p)),
\]
and so 
\[
\#\mathrm{col}(A^i_{g,n})-\rank(A^i_{g,n}(p)) \geq \dim RH^{2i}(\Mb_{g,n}).
\]
Oftentimes, we compute $\rank(A^i_{g,n}(p))$ by computing a row echelon form. The computation of the row echelon form yields a generating set $\mathsf{G}^i(\Mb_{g,n})$ for $RH^{2i}(\Mb_{g,n})$ that is much smaller than $\S^i(\Mb_{g,n})$.

The rank of the intersection pairing
\[
\S^i(\Mb_{g,n})\times \S^{3g-3+n-i}(\Mb_{g,n})\rightarrow RH^{6g-6+2n}(\Mb_{g,n})\cong \Q
\]
gives a lower bound for $\dim RH^{2i}(\Mb_{g,n})$. The pairing is implemented in \texttt{admcycles}. In practice, we do not compute the full intersection pairing. Instead, we use the generating set $\mathsf{G}^i(\Mb_{g,n})\subset \S^i(\Mb_{g,n})$ and compute the rank of the pairing
\[
\mathsf{G}^i(\Mb_{g,n})\times \S^{3g-3+n-i}(\Mb_{g,n})\rightarrow RH^{6g-6+2n}(\Mb_{g,n})\cong \Q.
\]
If the upper and lower bounds agree, then we have calculated $\dim RH^{2i}(\Mb_{g,n})$.

\begin{rem}
    The above calculations can also be done $S_{\lambda}$-equivariantly, where $S_{\lambda}\subset S_n$ is a Young subgroup as in Section \ref{section: symmetric functions}. Applying Lemma \ref{lem:Young}, we can thus compute $[RH^i(\Mb_{g,n})]\in K_0(\Gal_\Q^{S_n})$.
\end{rem}

\subsection{Proof of Theorem \ref{thm:Mbargn} when $g\geq 5$}\label{sec:gatleast5}
Here, we compute $\chi_c^{S_n}(\Mb_{g,n})$ for the pairs $(g,n)$ in Theorem~\ref{thm:Mbargn} where $g\geq 5$. Each case is presented as a separate example.

\begin{example}\label{ex:Mb5}
We calculate $\chi_c(\Mb_5)$. By Theorem \ref{thm:CL}, we have 
\[
\chi_c(\Mb_5) = \sum_{i=0}^{12} \dim RH^{2i}(\Mb_5)\L^i.
\]
By Poincar\'e duality, it suffices to compute 
\[
\sum_{i=0}^{6} \dim RH^{2i}(\Mb_5)\L^i.
\]
Using the method outlined in Section \ref{sec:admcycles}, we compute
\[
\sum_{i=0}^{5} \dim RH^{2i}(\Mb_5)\L^i = 1+ 4\L + 19\L^2 + 65\L^3 + 165\L^4 + 291\L^5
\]
To compute the remaining coefficient, we use Lemma \ref{lem:specializations}. The numerical Euler characteristic $e(\Mb_5)$ determines a linear relation among the coefficients of $\chi_c(\Mb_5)$. We calculate $e(\Mb_5)=1445$ using \cite{Gorsky} and the Getzler--Kapranov method as in Section \ref{sec:GetzlerKapranov} (one could alternatively have used \cite{biniharer}), and we thus determine $\dim RH^{12}(\Mb_5)=355$.
In total, we have
\[
\chi_c(\Mb_5)=\L^{12}+4\L^{11}+19\L^{10}+65\L^9+165\L^8+291\L^7+355\L^6
+291\L^5+165\L^4+65\L^3+19\L^2+4\L+1
\]
\end{example}

\begin{example}\label{ex:Mb51}
    We calculate $\chi_c(\Mb_{5,1})$. The method is the same as for $\chi_c(\Mb_{5})$. 
    By Theorem \ref{thm:CL}, we have 
\[
\chi_c(\Mb_{5,1}) = \sum_{i=0}^{13} \dim RH^{2i}(\Mb_{5,1})\L^i.
\]
By Poincar\'e duality, it suffices to compute 
\[
\sum_{i=0}^{6} \dim RH^{2i}(\Mb_{5,1})\L^i.
\]
Using the method outlined in Section \ref{sec:admcycles}, we compute
\[
\sum_{i=0}^{5} \dim RH^{2i}(\Mb_{5,1})\L^i = 1 + 7\L + 44\L^2 + 197\L^3 + 624\L^4 + 1371\L^5.
\]
Using the Getzler--Kapranov formula and \cite{Gorsky}, we see $e(\Mb_{5,1})=8578$. Thus, we have $\dim RH^{12}(\Mb_{5,1})=2045$. In total, we have

\begin{equation*}
    \begin{split}
\chi_c(\Mb_{5,1})&=\L^{13}+7\L^{12}+44\L^{11}+197\L^{10}+624\L^9+1371\L^8+2045\L^7
+2045\L^6+1371\L^5+\\&624\L^4+197\L^3+44\L^2+7\L+1.
    \end{split}
\end{equation*}
\end{example}

\begin{example}\label{ex:Mb52}
    We calculate $\chi_c^{S_2}(\Mb_{5,2})$. There are two new ingredients when compared with Examples~\ref{ex:Mb5} and \ref{ex:Mb51}. The first is that we use Lemma \ref{lem:Young} and Poincar\'e duality to reduce the problem to the calculation of 
    \[\sum_{i=0}^{7} \dim RH^{2i}(\Mb_{5,2})\L^i \quad \text{ and } \quad \sum_{i=0}^{7} \dim RH^{2i}(\Mb_{5,2})^{S_2}\L^i.\]
    Using the method of Section \ref{sec:admcycles}, we calculate
    \[
    \sum_{i=0}^5 RH^{2i}(\Mb_{5,2}) \L^i = 1 + 13\L + 113\L^2 + 658\L^3 + 2627\L^4+7147\L^5
    \]
    and
    \[
    \sum_{i=0}^{5} RH^{2i}(\Mb_{5,2})^{S_2} \L^i = 1 + 10\L + 81\L^2 + 443\L^3 + 1714\L^4 + 4566\L^5.
    \]
    
    The second new ingredient is that we use Lemma \ref{lem:specializations} to obtain \emph{two} linear relations among the coefficients of the two Poincar\'e polynomials. The first relation comes from the Euler characteristics $e(\Mb_{5,2})=63501$ and $e(\Mb_{5,2}/S_2) =40398$, see \cite{Gorsky}.
    The second relation comes from the calculation of $\#\Mb_{5,2}(\F_{2})=14824959$ and $\#(\Mb_{5,2}/S_2)(\F_2)=9565841$ using the Getzler--Kapranov method and Dragutinovi\'c's calculation of $\#M_{5,2}(\F_2)$ and $\#(M_{5,2}/S_2)(\F_2)$ \cite{Dragutinovic}. The outcome is that 
    \begin{equation*}
    \begin{split}
    \chi_c^{S_2}(\Mb_{5,2})&=(\L^{14}+10\L^{13}+81\L^{12}+443\L^{11}+1714\L^{10}+4566\L^9+8310\L^8+10148\L^7
+8310\L^6+\\&4566\L^5+1714\L^4+443\L^3+81\L^2+10\L+1)s_2+
(3\L^{13}+32\L^{12}+215\L^{11}+913\L^{10}+\\&2581\L^9+4828\L^8+5959\L^7
+4828\L^6+2581\L^5+913\L^4+215\L^3+32\L^2+3\L)s_{1,1}.
\end{split}
   \end{equation*}
\end{example}
\begin{example}\label{ex:Mb53}
We calculate $\chi_c^{S_3}(\Mb_{5,3})$. The method is the same as Example \ref{ex:Mb52}. Using \texttt{admcycles}, we calculate

    \[\sum_{i=0}^{5} \dim RH^{2i}(\Mb_{5,3})^{S_3} \L^i = 1 + 13 \L + 126 \L^2 + 833 \L^3 + 3833 \L^4 + 12240 \L^5, \]

\[\sum_{i=0}^{5} \dim RH^{2i}(\Mb_{5,3})^{S_2}\L^i = 1 + 19 \L + 213 \L^2 + 1537 \L^3 + 7478 \L^4 + 24713 \L^5,\]
and
\[
\sum_{i=0}^{5}  \dim RH^{2i}(\Mb_{5,3}) \L^i = 1 + 25 \L + 307 \L^2 + 2340 \L^3 + 11765 \L^4 + 39663 \L^5.
\]
We then obtain two more linear relations among the coefficients using the Euler characteristics $e(\Mb_{5,3})^{S_3}=167424$, $e(\Mb_{5,3})^{S_2}=344218$, and $e(\Mb_{5,3})=558290$, see \cite{Gorsky}, and the point counts $\#(\Mb_{5,3}/S_3)(\F_{2}) =56282859$ and $\#(\Mb_{5,3}/S_2)(\F_2)=113588355$, and $\#\Mb_{5,3}(\F_{2})=182374671$, which are again obtained from the Getzler--Kapranov method and Dragutinović's equivariant point counts for $M_{5,3}$. The outcome is that
    \begin{equation*}
    \begin{split}
    \chi_c^{S_3}(\Mb_{5,3})&=(\L^{15}+13\L^{14}+126\L^{13}
+833\L^{12}+3833\L^{11}+12240\L^{10}+26819\L^9+39847\L^8+\\&39847\L^7+26819\L^6
+12240\L^5+3833\L^4+833\L^3+126\L^2+13\L+1)s_3+\\&(6\L^{14}+87\L^{13}+704\L^{12}
+3645\L^{11}+12473\L^{10}+28443\L^9+43039\L^8+43039\L^7+\\&28443\L^6+12473\L^5
+3645\L^4+704\L^3+87\L^2+6\L)s_{2,1}+\\&(7\L^{13}+99\L^{12}+642\L^{11}+2477\L^{10}
+6028\L^9+9386\L^8+9386\L^7+6028\L^6+2477\L^5+\\&642\L^4+99\L^3+7\L^2)s_{1,1,1}.
\end{split}
   \end{equation*}

\end{example}

\begin{rem}
    Dragutinović's  point counts over $\F_2$ could have been used in Examples \ref{ex:Mb5} and \ref{ex:Mb51} as well, but the computer calculations we did were sufficiently quick so as to not need the extra linear relation among the coefficients.
\end{rem}
\begin{example}\label{ex:Mb6}
We calculate $\chi_c(\Mb_{6})$. The method is the same as in Examples \ref{ex:Mb5} and \ref{ex:Mb51}.
Using \texttt{admcycles}, we calculate
\[
\sum_{i=0}^{6} \dim RH^{2i}(\Mb_6) \L^{i} = 1 + 5\L + 25 \L^2 + 111 \L^3 + 376 \L^4 + 988 \L^5 + 1917 \L^6. 
\]
Using the Getzler--Kapranov formula and \cite{Gorsky}, we have $e(\Mb_6)=12218$. Thus, $\dim RH^{14}(\Mb_6)=2686$. In total, we have
\begin{equation*}
    \begin{split}
        \chi_c(\Mb_6)&=\L^{15}+5\L^{14}+25\L^{13}+111\L^{12}+376\L^{11}+988\L^{10}+1917\L^9+2686\L^8+2686\L^7+\\&1917\L^6+988\L^5+376\L^4+111\L^3+25\L^2+5\L+1.
    \end{split}
\end{equation*}

\end{example}
\begin{example}\label{ex:Mb61}
We calculate $\chi_c(\Mb_{6,1})$. The method is similar to Examples \ref{ex:Mb52} and \ref{ex:Mb53}. Using \texttt{admcycles}, we calculate
\[\sum_{i=0}^{6} \dim RH^{2i}(\Mb_{6,1}) \L^i = 1 + 8 \L + 60 \L^2 + 333 \L^3 + 1426 \L^4 + 4530 \L^5 + 10597 \L^6. \]
Then, we obtain two linear relations among the coefficients from the Euler characteristic $e(\Mb_{6,1})=90640$, see \cite{Gorsky}, and from the point count $\#\Mb_{6,1}(\F_2)=47655305$, which was obtained by the Getzler--Kapranov method and Huang, Kedlaya, and Lau's calculation $\#M_{6,1}(\F_2)=223317$ \cite{KHL}. As a result, we obtain $\dim RH^{14}(\Mb_{6,1})=17775$ and $\dim RH^{16}(\Mb_{6,1})=21180$. In total, we have
\begin{equation*}
    \begin{split}
\chi_c(\Mb_{6,1})&=\L^{16}+8\L^{15}+60\L^{14}+333\L^{13}+1426\L^{12}+4530\L^{11}+10597\L^{10}+17775\L^9+\\&21180\L
^8+17775\L^7+10597\L^6+4530\L^5+1426\L^4+333\L^3+60\L^2+8\L+1.
    \end{split}
\end{equation*}

\end{example}
The calculations in Examples \ref{ex:Mb5}--\ref{ex:Mb61} finish the proof of Theorem \ref{thm:Mbargn} when $g\geq 5$. \qedsymbol 



\begin{rem}
    The difficulty of the calculation using \texttt{admcycles} of $\dim RH^{2i}(\Mb_{g,n})$ increases rapidly with $i$, $g$, and $n$. For example, the calculations in Example \ref{ex:Mb5} take about an hour at most, whereas the higher codimension calculations in Examples \ref{ex:Mb53} and \ref{ex:Mb61} can take on the order of months. In addition to the issue of time, there is also the issue of computer memory. The matrices of Pixton's relations used to obtain upper bounds on $\dim RH^{2i}(\Mb_{g,n})$ are large, and the linear algebra computations can lead to fill-in, thus causing memory usage to increase rapidly. The pairing matrices computed to obtain lower bounds on $\dim RH^{2i}(\Mb_{g,n})$ are smaller, but they are dense, and thus also require significant memory usage.
\end{rem}

\section{Local systems in genus four}\label{sec:genus4}

In this section we explain the methods used when $g=4$. The reason that we apply different techniques in this case is that computations in \texttt{admcycles} do not scale well with the number of marked points, and the methods described in the preceding section are not computationally feasible by themselves, even though point counts over more finite fields are available when $g=4$ than in genus $5$ and $6$.

\subsection{Passing back and forth between local systems and $S_n$-equivariant Euler characteristics}\label{subsec: local systems}
Fix $g \geq 2$ for the rest of this subsection. For a partition $\lambda = (\lambda_1 \geq \lambda_2 \geq \dots \geq \lambda_g\geq 0)$, let $\V_\lambda$ denote the local system on $M_g$ associated to the corresponding representation of $\mathrm{Sp}_{2g}$. The $\V_\lambda$ are smooth $\ell$-adic sheaves of weight $|\lambda|= \sum_i \lambda_i$. If $\lambda$ is a partition of length greater than $g$, then we take $\V_\lambda$ to be zero. The local system $\V_{(1,0,0,\dots)}$ associated to the defining representation will be denoted $\V$. We write $S^\lambda(\V)$ for the Schur functor $S^\lambda$ applied to the local system $\V$. If $\mathbb W$ is any of the above local systems, then we set
\[ \chi_c(M_g,\mathbb W) := \sum_i (-1)^i [H^i_c(M_g,\mathbb W)] \in K_0(\Gal_\Q).\]
What will be important for us is that we can freely pass back and forth between the following two sets of data:
 \begin{equation}\label{eq:correspondence}
     \{\langle s_\lambda,\chi_c^{S_n}(M_{g,n})\rangle\} \longleftrightarrow \{ \chi_c(M_g,\V_{\lambda'})\}.
 \end{equation} 
In more detail, consider the following four functions from integer partitions to $K_0(\Gal_\Q)$:
\begin{align*}
	\lambda & \longmapsto \langle s_\lambda, \chi_c^{S_n}(M_{g,n})\rangle, \\
	\lambda & \longmapsto \langle s_\lambda, \chi_c^{S_n}(C_g^n)\rangle, \\
	\lambda & \longmapsto (-1)^{|\lambda|}\chi_c(M_g,S^{\lambda'}(\V)), \\
	\lambda & \longmapsto (-1)^{|\lambda|}\chi_c(M_g,\V_{\lambda'}).
\end{align*}
Here $C_g^n$ denotes the $n$-fold fibered power of the universal curve over $M_g$, and $\lambda'$ denotes the conjugate partition of $\lambda$. We claim that if $F$ and $G$ denote any two of these four functions, then there are matrix coefficients $a_{\lambda\mu} \in \Z[\L]$, given by explicit formulae, such that 
\[F(\lambda) = \sum_{\mu} a_{\lambda\mu} G(\mu)\]
for all $\lambda$; moreover, $a_{\lambda\mu}$ is nonzero only for $|\mu| < |\lambda|$, or if $\mu=\lambda$, in which case $a_{\lambda\lambda}=1$. In particular, if we fix $N$ and suppose that know any one of these functions for all $|\lambda| \leq N$, then we can determine the other three functions, too. This idea goes back to Getzler \cite{getzler-resolving}. 

Let us briefly justify the claim. 
For the first two, use that $C_g^n$ has a stratification with one open stratum $M_{g,n}$, $\binom n 2$ diagonal loci isomorphic to $M_{g,n-1}$, $\binom n 3 + \binom n {2,2,n-2}$ loci isomorphic to $M_{g,n-2}$, etc. Then additivity of Euler characteristics over stratifications gives
\[ \chi_c^{S_n}(C_g^n) = \chi_c^{S_n}(M_{g,n}) + (\text{terms involving $\chi_c^{S_{k}} (M_{g,k})$ for $k < n$} ). \]
For example, the first of these lower order terms is $\operatorname{Ind}_{S_{n-2}\times S_2}^{S_n} \operatorname{Res}^{S_{n-1}}_{S_{n-2}} \chi_c^{S_{n-1}}(M_{g,n-1})$, which expresses that along the diagonal locus where $x_i=x_j$ we have a configuration of $n-1$ points, one of which is singled out as special (it is the unique point of multiplicity two). In this case the matrix coefficients $a_{\lambda\mu}$ are in fact natural numbers. 

To pass between the second and the third, use that $H^*_c(C_g^n) = \mathbb H^*_c(M_g,(R\pi_*\Q_\ell)^{\otimes n})$, where $\pi: C\to M_g$ is the universal curve. We then decompose $(R\pi_*\Q_\ell)^{\otimes n}$ in the Grothendieck group of $S_n$-equivariant local systems on $M_g$, and then apply 
\[ \chi_c(M_g,-) : K_0(\mathrm{LSyst}_{M_g}^{S_n})\longrightarrow K_0(\mathrm{Gal}_\Q^{S_n}).\]
Schur--Weyl duality says that $V^{\otimes k} \cong \bigoplus_{|\lambda|=k} S^\lambda(V) \otimes \sigma_\lambda$, for any $k$ and any vector space $V$ in characteristic zero. Taking into account the K\"unneth theorem, and that the Koszul sign rule introduces a twist of $\mathrm{sgn}_k$ in the terms $(R^1 \pi_*\Q_\ell)^{\otimes k}$, we find
\[ [ (R\pi_*\Q_\ell)^{\otimes n}] = \sum_{r+s+t=n} h_r \cdot \L^t h_t \cdot (-1)^s\sum_{|\lambda|=s} [S^\lambda(\V)] s_{\lambda'} \in K_0(\mathrm{LSyst}_{M_g}) \otimes \Lambda_n.\]
The terms in the above sum correspond to K\"unneth components in $(R\pi_*\Q_\ell)^{\otimes n}$ with $r$ copies of $R^0\pi_*\Q_\ell \cong \Q_\ell$, $s$ copies of $R^1\pi_*\Q_\ell \cong \V$ and $t$ copies of $R^2\pi_*\Q_\ell \cong \L$.

The passage between the third and the fourth is the most classical. We can decompose each $S^\lambda(\V)$ as a direct sum of symplectic local systems $\V_\mu$, and the multiplicities are given precisely by the branching rule from $\mathrm{GL}_{2g}$ to $\mathrm{Sp}_{2g}$. 
The consequence of the branching rule is that 
\[ S^\lambda(\V) = \V_\lambda + (\text{lower order terms}),\]
with the lower order terms being of the form $\L^{(|\lambda|-|\mu|)/2}\V_\mu$, with $|\mu|<|\lambda|$.

\subsection{Weight bounds on motivic Euler characteristics of local systems}
We have already explained a computational advantage of $\chi_c(\overline M_{g,n})$ over $\chi_c(M_{g,n})$: 
if it is a polynomial in $\L$ then it is palindromic, by Poincar\'e duality, which effectively means that we only need to determine $\lceil \tfrac{3g-3+n} 2 \rceil$ unknown coefficients, instead of $3g-3+n$ coefficients. We now want to explain that the Deligne bounds \cite{weil2} on Frobenius eigenvalues imply another, independent, computational advantage of $\chi_c(M_g,\V_{\lambda})$: its high weight contributions are always zero. 

\begin{thm}[Deligne bounds]
	Let $X$ be an algebraic variety, $\mathbb V$ an $\ell$-adic sheaf on $X$ which is mixed of weights $\leq w$. Then $H^k_c(X,\V)$ is mixed of weights $\leq k+w$.   
\end{thm}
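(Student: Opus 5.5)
The plan is to deduce the statement from Deligne's main theorem in Weil II, which treats $Rf_!$ for a morphism $f\colon X\to Y$ of schemes of finite type over a finite field, applied to the structure morphism $f\colon X\to \operatorname{Spec} k$. Since $H^k_c(X,\V)$ is $\mathcal H^k(Rf_!\V)$, the statement is that $Rf_!$ preserves "mixed of weights $\leq w$" in the derived sense: $\mathcal H^k$ has weights $\leq k+w$. The problem is over $\Q$, whereas weights are defined via Frobenius. We therefore first spread $X$ and $\V$ out over $\Z[1/N]$ for suitable $N$. We then note that, by proper and smooth base change on a dense open of the base together with constructibility, $H^k_c(X_{\Qbar},\V)$ agrees with $H^k_c(X_{\overline{\F}_p},\V)$ for all $p$ outside a finite set. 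It follows that "mixed of weights $\leq k+w$" may be checked on Frobenius eigenvalues at almost all primes, which is the definition used for Galois representations.

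Over a finite field, the argument has three steps.

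First, we reduce to the case where $\V$ is pure of weight $w'\leq w$ and lisse on a smooth, irreducible, affine curve-free piece. To do this, stratify $X$ into locally closed smooth pieces $X_j$ on which $\V$ is lisse, and use the excision long exact sequences
\[\cdots\to H^k_c(U,\V)\to H^k_c(X,\V)\to H^k_c(Z,\V)\to\cdots.\]
Since a subquotient-closed, extension-closed condition such as "weights $\leq k+w$" propagates through long exact sequences, this reduces the problem to each stratum. A mixed lisse sheaf has a finite filtration with punctually pure graded pieces, so the same argument reduces to pure pieces.

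Second, we handle a lisse, pure sheaf of weight $w'$ on a smooth variety $U$ of dimension $d$. Poincaré duality identifies $H^k_c(U,\V)$ with the dual of $H^{2d-k}(U,\V^\vee)(d)$. So it suffices to show that $H^{m}(U,\V^\vee)$ has weights $\geq m-w'$; dually this gives weights $\leq k+w'$. This is the heart of Weil II. I would prove it by fibering $U$ (after shrinking and a further stratification) into a family of affine curves via a Lefschetz pencil or an Artin "elementary fibration", and using the Leray spectral sequence to reduce to relative dimension one. The curve case is the core estimate: for a lisse pure sheaf on a smooth curve, $H^1_c$ has weights $\leq w'+1$, and $H^2_c$ has weight exactly $w'+2$ by coinvariants. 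This is proved by Deligne's Rankin–Selberg/tensor-power argument, in which the $L$-function of $\mathcal F\otimes\mathcal F^\vee$ has no pole in the relevant half plane.

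Third, the inductive step on fibrations needs $R^q g_!$ of pure sheaves to again be mixed of the right weights. This is exactly the relative statement one is proving, so the induction is on $\dim X$.

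The main obstacle is the curve estimate in the second step, namely the Rankin–Selberg/positivity argument giving $H^1_c$ weight $\leq w+1$. Everything else is formal dévissage. Realistically, in the paper context I would simply cite \cite{weil2} (Théorème 3.3.1) for the finite-field statement, and supply only the spreading-out/comparison argument to transfer it to $\Gal_\Q$. That comparison is routine given the good-reduction setup already used in Lemma~\ref{lem:specializations}.
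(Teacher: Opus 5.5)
Your proposal agrees with the paper, which gives no argument for this statement beyond citing \cite{weil2}: your plan of invoking Th\'eor\`eme~3.3.1 there over finite fields and transferring it to $\Gal_\Q$ by spreading out is exactly what that citation implicitly relies on (constructibility of $R^kf_!\V$ over an open of $\operatorname{Spec}\Z$ plus proper base change lets you read the weights off Frobenius at almost all primes). In your optional sketch of Deligne's proof, the Poincar\'e-duality detour to ordinary cohomology is unnecessary and would cause trouble, since lower weight bounds for $Rg_*$ are not pointwise conditions and need the derived-dual formalism; Deligne's induction instead runs directly on $Rf_!$ through fibrations in curves, as in your third step.
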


\begin{cor} \label{Deligne corollary}
The compactly supported cohomology $H^\bullet_c(M_g,\V_\lambda)$ is of weights $\leq 6g-6+|\lambda|$. In particular, if $\chi_c(M_g,\V_\lambda)$ is a polynomial in $\L$, then it is of degree at most $3g-3+\tfrac 1 2 |\lambda|$. 
	\end{cor}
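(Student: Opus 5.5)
We apply the Deligne bounds with $X = M_g$ (more precisely, to a fine level cover if one insists on varieties rather than stacks) and $\V = \V_\lambda$.

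First, the local system $\V_\lambda$ is pure of weight $|\lambda|$. Indeed, $\V = R^1\pi_*\Q_\ell$ is pure of weight $1$ by the Weil conjectures applied fibrewise to the universal curve. Moreover, $\V_\lambda$ is a direct summand of $\V^{\otimes|\lambda|}$, cut out by Schur--Weyl and symplectic idempotents. Hence $\V_\lambda$ is pure, and in particular mixed of weights $\leq |\lambda|$.

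Next, $M_g$ has dimension $3g-3$, so $H^k_c(M_g,\V_\lambda)$ vanishes unless $0\leq k\leq 6g-6$. By the Deligne bounds, each such group is mixed of weights $\leq k+|\lambda|\leq 6g-6+|\lambda|$. This proves the first claim.

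To pass from $M_g$ to a variety, we take a level-$m$ structure $M_g[m]\to M_g$ with $m\geq 3$. This is a finite Galois cover by a smooth quasi-projective variety, which is defined over $\Q(\zeta_m)$ or over $\Z[1/m]$. The cohomology of $M_g$ with coefficients in $\V_\lambda$ is then the invariant part under the finite group, since we work with $\Q_\ell$-coefficients. Weights are preserved under taking invariants, and also under restriction to the Galois group of $\Q(\zeta_m)$, because weights are detected on Frobenius elements at primes of good reduction.

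For the second claim, suppose $\chi_c(M_g,\V_\lambda)=\sum_j c_j\L^j$. Each $H^k_c$ has a weight filtration whose graded pieces are pure, and in $K_0(\Gal_\Q)$ the class $[H^k_c]$ equals the sum of the classes of these graded pieces. Therefore $\chi_c(M_g,\V_\lambda)$ is an alternating sum of classes of pure representations of weights $\leq 6g-6+|\lambda|$.

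Now $\L^j$ is pure of weight $2j$, and classes of pure representations of different weights cannot cancel in $K_0(\Gal_\Q)$. To see this, we can group the semisimplifications by weight, which is well-defined via the Frobenius eigenvalues. So if $c_j\neq 0$ then $2j\leq 6g-6+|\lambda|$, that is, $j\leq 3g-3+\tfrac12|\lambda|$.

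The only delicate point is this last independence argument. Concretely, we apply the trace of $\mathrm{Frob}_{p^t}$ for a good prime $p$. Weight-$w$ contributions are sums of Weil numbers of absolute value $p^{tw/2}$, while $\sum_j c_j p^{tj}$ is a polynomial in $p^t$. Letting $t\to\infty$ and comparing growth rates, with a standard linear-independence-of-characters argument, forces the top coefficients beyond the weight bound to vanish.
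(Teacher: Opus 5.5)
Your argument is correct and is essentially the paper's: the corollary is stated as an immediate consequence of the Deligne bounds, applied with $\V_\lambda$ pure of weight $|\lambda|$ and $k\le 6g-6=2\dim M_g$, and the degree bound then follows because $\L$ has weight $2$. The level-cover reduction and the Frobenius-trace argument that terms of different weights cannot cancel are extra details the paper leaves implicit; for a bound on the top degree, the growth comparison in $t$ suffices on its own, so you do not need the linear-independence-of-characters step.
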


Note that when it is polynomial in $\L$, the motivic Euler characteristic $\chi_c(\Mb_{g,n})$ is of degree $3g-3+n$. Since $n$ corresponds to $|\lambda|$ under the correspondences \eqref{eq:correspondenceGK} and \eqref{eq:correspondence}, Corollary \ref{Deligne corollary} gives a significant computational advantage when $n$ is large, see further in Proposition~\ref{weight proposition}.

We use known results about $H^\bullet(M_g,\V_\lambda)$ in low degrees (equivalently, compact support cohomology in high degrees) to deduce a few more results about the high weight contributions to 
$\chi_c(M_g,\V_{\lambda})$.

\begin{prop}\label{prop:H0}
	We have $H^0(M_g,\V_\lambda)=0$ unless $\lambda=0$. 
\end{prop}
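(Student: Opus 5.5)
We identify $H^0(M_g,\V_\lambda)$ with the invariants of the monodromy representation. Since $M_g$ is a connected Deligne--Mumford stack, $H^0$ of a local system is the space of global flat sections. Working over $\Qbar$ and comparing with Betti cohomology, this is the fixed space of $\pi_1^{\mathrm{orb}}(M_g)$ acting on the fiber. The orbifold fundamental group is the mapping class group $\Gamma_g$, and it acts on the fiber $V_\lambda$, the irreducible $\mathrm{Sp}_{2g}$-representation of highest weight $\lambda$, through the symplectic representation $\Gamma_g \to \mathrm{Sp}_{2g}(\Z)$ on $H^1$ of a fixed curve. So it suffices to show that $V_\lambda^{\mathrm{Sp}_{2g}(\Z)}=0$ for $\lambda\neq 0$.

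The key step is Zariski density. We will use two facts:
\begin{itemize}
\item[(i)] The map $\Gamma_g\to\mathrm{Sp}_{2g}(\Z)$ is surjective. This is classical: symplectic transvections are images of Dehn twists.
\item[(ii)] $\mathrm{Sp}_{2g}(\Z)$ is Zariski dense in $\mathrm{Sp}_{2g}$, by Borel density, since $\mathrm{Sp}_{2g}$ is semisimple with no compact factors over $\mathbf R$. One can also see this directly: the transvections generate unipotent one-parameter subgroups whose Zariski closures generate $\mathrm{Sp}_{2g}$.
\end{itemize}
A vector fixed by a Zariski dense subgroup is fixed by the whole algebraic group, because the stabilizer of a vector is Zariski closed. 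Hence any invariant vector in $V_\lambda$ spans a trivial $\mathrm{Sp}_{2g}$-subrepresentation. Since $V_\lambda$ is irreducible and nontrivial for $\lambda\neq0$, there is no such vector. This argument works with $\Q_\ell$-coefficients, and since $\mathrm{Sp}_{2g}$ is split, no field issues arise.

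The main obstacle is a bookkeeping matter: justifying cleanly the passage from $\ell$-adic $H^0$ over $\Qbar$ to monodromy invariants of the topological fundamental group of the stack. For this I would either invoke the comparison theorem for constructible sheaves on DM stacks, or pass to a finite étale cover by a scheme, such as $M_g[\ell^k]$ with level structure, and take invariants of the finite group. The second route has an added benefit: on the level cover the monodromy group is the finite-index subgroup $\Gamma(\ell^k)\subset\mathrm{Sp}_{2g}(\Z)$, which is still Zariski dense. The same argument therefore applies directly there, and we finish by taking invariants.
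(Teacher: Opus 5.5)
Your proposal is correct and follows the same route as the paper, whose proof consists only of the remark that the mapping class group surjects onto $\mathrm{Sp}_{2g}(\Z)$. You have made explicit the steps the paper leaves to the reader: identifying $H^0$ with monodromy invariants, Zariski density of $\mathrm{Sp}_{2g}(\Z)$ in $\mathrm{Sp}_{2g}$, and irreducibility and nontriviality of $V_\lambda$ for $\lambda\neq 0$.
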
 
\begin{proof}
	It is classical that the mapping class group surjects to $\mathrm{Sp}_{2g}(\Z)$.
\end{proof}

The cohomology in degree $1$ has been completely calculated by Hain \cite{hain}, building on work of Johnson \cite{johnson}:

\begin{thm}[Hain]\label{thm:Hain} Let $g \geq 3$. Then $H^1(M_g,\V_{111})\cong \L^2$. If $\lambda \neq (1,1,1)$, then $H^1(M_g,\V_{\lambda})=0$. 	
\end{thm}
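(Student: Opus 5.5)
The plan is to deduce Hain's theorem from Johnson's computation of the abelianization of the Torelli group, combined with the group-cohomological description
\[ H^1(M_g,\V_\lambda) \cong H^1(\Gamma_g, V_\lambda), \]
where $\Gamma_g$ is the mapping class group. This description holds because $M_g$ is a rational $K(\Gamma_g,1)$ as an orbifold. The Galois/Hodge structure is to be tracked alongside.

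The first step is the inflation--restriction (Hochschild--Serre) sequence for the extension $1\to T_g\to\Gamma_g\to \mathrm{Sp}_{2g}(\Z)\to 1$:
\[ 0\to H^1(\mathrm{Sp}_{2g}(\Z),V_\lambda)\to H^1(\Gamma_g,V_\lambda)\to H^1(T_g,V_\lambda)^{\mathrm{Sp}_{2g}(\Z)}\to H^2(\mathrm{Sp}_{2g}(\Z),V_\lambda). \]
Here I use that $T_g$ acts trivially on $V_\lambda$.

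The outer terms are handled by Borel stability and Raghunathan vanishing. For $g\ge 3$ and $\lambda\neq 0$, $H^1(\mathrm{Sp}_{2g}(\Z),V_\lambda)=0$. Moreover, $H^1(\mathrm{Sp}_{2g}(\Z),\Q)=0$, which together with $H^1(\Gamma_g,\Q)=0$ handles $\lambda=0$. For the last map I expect to need $H^2(\mathrm{Sp}_{2g}(\Z),V_\lambda)$ only when the middle invariant term is nonzero, i.e.\ for $\lambda=(1,1,1)$, and there I would instead argue directly that the map is zero.

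The middle term is computed via Johnson. One has
\[ H^1(T_g,V_\lambda)^{\mathrm{Sp}} = \mathrm{Hom}_{\mathrm{Sp}}(H_1(T_g,\Q),V_\lambda), \]
and by Johnson, for $g\ge3$, $H_1(T_g,\Q)\cong\Lambda^3 H/H \cong V_{111}$. So the invariants are zero unless $\lambda=(1,1,1)$, in which case they are one-dimensional.

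To show the class for $(1,1,1)$ actually lifts, I would use the Johnson homomorphism extending to a crossed homomorphism $\Gamma_g\to \tfrac12\Lambda^3H$ (Morita). Equivalently, the Abel--Jacobi/Ceresa normal function gives a nonzero class in $H^1(M_g,\V_{111})$. This gives $H^1(M_g,\V_{111})$ one-dimensional, with all other $\lambda\neq(1,1,1)$ giving zero.

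Finally the weight/Tate twist must be determined. Hain's identification via the normal function of the Ceresa cycle shows that the class comes from an extension of $\Q$ by $\V_{111}(1)$. Equivalently, the mixed structure on $H^1$ has weight $4 = 1+3$, consistent with $H^1(M_g,\V_\lambda)$ having weights $\ge 1+|\lambda|$, and it is Tate of type $(2,2)$, i.e.\ $\L^2$.

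I expect this last step, pinning down the motivic/Galois structure as exactly $\L^2$ rather than just its dimension, to be the main obstacle. The approach would be to use that the class is algebraic (cycle class of the Ceresa cycle in a family), hence spanned by a Tate class of the expected weight.
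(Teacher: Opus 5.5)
Your outline is correct, and it is essentially the same argument as the paper's: the paper gives no proof of its own and simply cites Hain, building on Johnson, while you reconstruct that standard proof (Hochschild--Serre for $1\to T_g\to\Gamma_g\to\mathrm{Sp}_{2g}(\Z)\to 1$, Raghunathan vanishing of $H^1(\mathrm{Sp}_{2g}(\Z),V_\lambda)$, Johnson's $H_1(T_g,\Q)\cong\Lambda^3H/H$ plus Borel density for the invariants, Morita's extension of the Johnson homomorphism for surjectivity, and the $\Q$-rationality of the Ceresa cycle for the Galois structure). One slip to fix: the Ceresa normal function is an extension of $\Q$ by $\V_{111}(2)$, of weight $-1$, not by $\V_{111}(1)$ (that twist would put a weight-$2$ class in $H^1(M_g,\V_{111})$, contradicting the bound $\geq 4$ you invoke), and with the correct twist the Galois-invariant Abel--Jacobi class of the Ceresa cycle spans the one-dimensional space $H^1(M_g,\V_{111}(2))$, which gives exactly $H^1(M_g,\V_{111})\cong\L^2$.
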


\begin{cor}\label{weight corollary}
    Let $g \geq 3$ and $\lambda \neq 0$. 
    \begin{enumerate}
    \item If $\lambda = (1,1,1)$, then $\chi_c(M_g,\V_\lambda)$ 
    has highest weight contribution 
    $-\L^{3g-2}$. 
    \item If $\lambda \neq (1,1,1)$, then $\chi_c(M_g,\V_\lambda)$ has contribution of weight at most $6g-8+ |\lambda|$. 
    \end{enumerate}
\end{cor}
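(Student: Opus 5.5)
We plan to deduce the corollary from Poincaré duality on $M_g$, combined with Proposition \ref{prop:H0} and Theorem \ref{thm:Hain}. The point is that these two results control ordinary cohomology in low degrees, while the high-weight part of $\chi_c$ lives in compactly supported cohomology in high degrees. $M_g$ is a smooth Deligne--Mumford stack of dimension $d=3g-3$. Moreover $\V_\lambda$ is self-dual up to a Tate twist, since the symplectic form gives $\V_\lambda^\vee\cong \V_\lambda\otimes \L^{-|\lambda|}$. Poincaré duality with rational coefficients (valid for DM stacks) then gives
\[ H^k_c(M_g,\V_\lambda)\cong H^{2d-k}(M_g,\V_\lambda)^\vee\otimes \L^{d}\cong H^{2d-k}(M_g,\V_\lambda)\otimes \L^{d+|\lambda|-\cdots}, \]
with the Tate twist arranged so that weights match. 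Concretely, if $H^j(M_g,\V_\lambda)$ has a piece of weight $w$, then $H^{2d-j}_c(M_g,\V_\lambda)$ has a piece of weight $2d+|\lambda|-w$.

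First we use weights to decide which degrees can contribute. The Deligne bound says $H^k_c(M_g,\V_\lambda)$ has weights $\le k+|\lambda|$. Hence weights strictly above $6g-8+|\lambda|=2d-2+|\lambda|$ can only occur in $H^{2d}_c$ and $H^{2d-1}_c$, with weights at most $2d+|\lambda|$ and $2d-1+|\lambda|$ respectively. Possible cancellation in $\chi_c$ is harmless for an upper bound, so it suffices to analyse these two groups.

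Next we identify those two groups by duality. We have $H^{2d}_c(M_g,\V_\lambda)\cong H^0(M_g,\V_\lambda)^\vee(-d-|\lambda|)$, which vanishes for $\lambda\neq 0$ by Proposition \ref{prop:H0}. Likewise $H^{2d-1}_c(M_g,\V_\lambda)\cong H^1(M_g,\V_\lambda)^\vee\otimes\L^{d+|\lambda|}$ after the twist. By Theorem \ref{thm:Hain} this is zero unless $\lambda=(1,1,1)$, which proves (2). In case (1) we have $|\lambda|=3$ and $H^1\cong\L^2$, whose dual is $\L^{-2}$. This gives $H^{2d-1}_c\cong \L^{-2}\otimes\L^{d+3}=\L^{d+1}=\L^{3g-2}$, of weight $6g-4=2d-1+|\lambda|$. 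It appears in odd degree, so it contributes $-\L^{3g-2}$. Every other degree has weight $\le 2d-2+3<6g-4$, so this is indeed the highest-weight contribution.

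The main obstacle is bookkeeping the Tate twist in the duality precisely, that is, checking $\V_\lambda^\vee\cong\V_\lambda(|\lambda|)$ with the sign conventions making $\V$ pure of weight $1$. We will verify it against the weight computation: $H^1(M_g,\V_{111})$ has weight $4$, and its dual lands in weight $2(2d)+... $. The consistency check is that $H^{2d-1}_c$ must have weight at most $2d-1+3$, and $\L^{3g-2}$ has weight exactly $6g-4$, so the twist is forced.
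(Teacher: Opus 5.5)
Your argument is correct and is essentially the paper's proof, which combines the Deligne bound, Poincar\'e duality, Proposition~\ref{prop:H0} and Hain's theorem in the same way: only $H^{2d}_c$ and $H^{2d-1}_c$ can carry weight above $6g-8+|\lambda|$, and these are dual to $H^0$ and $H^1$. One bookkeeping slip is that, since $H^{2d-j}_c(M_g,\V_\lambda)\cong H^j(M_g,\V_\lambda)^\vee\otimes\L^{d+|\lambda|}$, a weight-$w$ piece of $H^j$ gives weight $2d+2|\lambda|-w$ (not $2d+|\lambda|-w$), and your first displayed isomorphism should have $\V_\lambda^\vee$ as coefficients; your actual computation $H^{2d-1}_c(M_g,\V_{111})\cong\L^{3g-2}$ already uses the correct twist, so the conclusion is unaffected.
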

\begin{proof}
    The result follows from Corollary \ref{Deligne corollary}, Proposition \ref{prop:H0}, Theorem \ref{thm:Hain}, and Poincar\'e duality.
\end{proof}

\begin{prop} \label{weight proposition}
Fix $(g,n)$ with $g \geq 3$. The following pieces of data determine $\chi_c^{S_n}(\Mb_{g,n})$.
\begin{enumerate}
    \item $\chi_c^{S_n}(\partial  M_{g,n})=\chi_c^{S_n}(\overline M_{g,n})-\chi_c^{S_n}(M_{g,n})$;
    \item $\chi_c^{S_n}(M_{g,n'})$ for all $n'<n$;
    \item for every partition $\lambda$ with $|\lambda|=n$, all contributions to $\chi_c(M_g,\V_{\lambda})$ of weights $w$ such that $3g-3+n \leq w \leq 6g-8+n$.
\end{enumerate}
\end{prop}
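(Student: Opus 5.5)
We reduce the problem to determining the unknown coefficients of the polynomial $\chi_c^{S_n}(\Mb_{g,n})$, and show that data (1)--(3) pin down enough of them, with Poincar\'e duality supplying the rest. We assume throughout, as in the intended applications, that $\chi_c^{S_n}(\Mb_{g,n})$ is polynomial in $\L$ (coefficientwise in the Schur basis). Then Poincar\'e duality makes it palindromic: for each $\lambda\vdash n$, the coefficient of $\L^i$ in $\langle s_\lambda,\chi_c^{S_n}(\Mb_{g,n})\rangle$ equals that of $\L^{3g-3+n-i}$. It therefore suffices to determine the coefficients of $\L^i$ with $i\geq \lceil (3g-3+n)/2\rceil$, or equivalently all contributions of weight $w=2i\geq 3g-3+n$. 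By (1), $\chi_c^{S_n}(\Mb_{g,n})=\chi_c^{S_n}(\partial M_{g,n})+\chi_c^{S_n}(M_{g,n})$, so it is enough to know all contributions of weight $\geq 3g-3+n$ to $\chi_c^{S_n}(M_{g,n})$.

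Next we pass to local systems via the triangular correspondence \eqref{eq:correspondence} of Section \ref{subsec: local systems}. We have
\[\langle s_\mu,\chi_c^{S_n}(M_{g,n})\rangle=(-1)^n\chi_c(M_g,\V_{\mu'})+\sum_{|\nu|<n}a_{\mu\nu}\,(-1)^{|\nu|}\chi_c(M_g,\V_{\nu'}),\]
with $a_{\mu\nu}\in\Z[\L]$ explicit. The lower order terms are determined by (2): applying the triangular correspondence to $M_{g,n'}$ for $n'<n$ recovers every $\chi_c(M_g,\V_\nu)$ with $|\nu|<n$. It thus remains to know the contributions of weight $\geq 3g-3+n$ to $\chi_c(M_g,\V_\lambda)$ for each $|\lambda|=n$. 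Since $g\geq 3$ and $n\geq 1$, we have $\lambda\neq 0$, so Corollary \ref{weight corollary} applies. If $\lambda\neq(1,1,1)$, there are no contributions of weight $>6g-8+n$. If $\lambda=(1,1,1)$, so $n=3$, the only contribution above $6g-8+n$ is the known term $-\L^{3g-2}$, of weight $6g-4=6g-6+n$. Combined with (3), this gives all contributions of weight $\geq 3g-3+n$.

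The one step needing care is that the lower order terms have the right weights, since $a_{\mu\nu}$ may shift weights. This causes no problem here, because those terms are known in full from (2), not merely in high weights, so no weight truncation of them is required. We also need the $K_0$-level identities to respect the weight filtration. For this we work in the Grothendieck group of Galois representations, where taking the weight-$w$ graded piece is additive and multiplication by $\L$ shifts weight by $2$. The "contribution of weight $w$" is then well defined and compatible with the identities above. Finally, if $3g-3+n>6g-8+n$, the range in (3) is empty and the statement still holds.
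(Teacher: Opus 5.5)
Your argument is correct and follows the paper's proof essentially step for step. You use Poincaré duality to reduce to weights $\geq 3g-3+n$, use (1) to pass to $M_{g,n}$, and use the unitriangular passage to the $\V_\lambda$, where the $|\lambda|=n$ terms enter untwisted and the lower terms are fully known from (2); the Deligne bound together with Proposition~\ref{prop:H0} and Corollary~\ref{weight corollary} then handles everything above weight $6g-8+n$. There are only three minor points:
\begin{enumerate}
\item The polynomiality assumption is unnecessary. Purity of $H^k(\Mb_{g,n})$ together with $[H^{2d-k}]=[H^k]^\vee\L^{d}$, where $d=3g-3+n$, works directly in $K_0$.
\item The term $-\L^{3g-2}$ has weight $6g-4=6g-7+n$, not $6g-6+n$.
\item The case $n=0$, which your ``$n\geq 1$'' excludes, goes through the same way using $H^0(M_g)=\Q_\ell$ and $H^1(M_g,\Q_\ell)=0$ (Theorem~\ref{thm:Hain} with $\lambda=0$).
\end{enumerate}
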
 
\begin{rem} The statement is formulated for $g \geq 3$ only because for $g \leq 2$, $\chi_c^{S_n}(M_{g,n})$ is computable for every $n$; see \cite{GetzlerGenus0} for genus $0$, \cite{getzler-resolving} for genus $1$ and \cite{PetersenA2} together with \cite{Petersendelliptic} for genus $2$. For a few more details on the genus $2$ computation, see \cite[Section 4]{BergstromFaber}.
\end{rem}
\begin{rem} Note that the range of weights of the contributions to $\chi_c(M_g,\V_{\lambda})$, for $\lambda$ with $|\lambda|=n$, that are needed in the assumption of in Proposition~\ref{weight proposition}, is equal to $3g-4$  and thus does not depend on $n$. 
\end{rem}
\begin{proof} Since $\overline M_{g,n}$ is smooth and proper, if we know all the contributions of weight greater or equal to $\dim \overline M_{g,n}=3g-3+n$, then we can determine all contributions of smaller weight by Poincar\'e duality. Since we know $\chi_c^{S_n}(\partial  M_{g,n})$, we only need to determine the contributions to $\chi_c^{S_n}(M_{g,n})$ of weight greater or equal to $3g-3+n$.

As described in Section~\ref{subsec: local systems}, $\chi_c^{S_n}(M_{g,n})$ can be computed from $\chi_c(M_g,\V_{\lambda})$ for all $\lambda$ such that $|\lambda| \leq n$. From knowing $\chi_c^{S_n}(M_{g,n'})$ for all $n'<n$ we can compute $\chi_c(M_g,\V_{\lambda})$ for all  $|\lambda| < n$. When writing $\chi_c^{S_n}(M_{g,n})$ in terms of $\chi_c(M_g,\V_{\lambda})$ for $|\lambda| \leq n$, then all contributions with $|\lambda|=n$ appear without Tate twist (in fact with coefficient $(-1)^{|\lambda|}$ to $s_{\lambda'}$, where $\lambda'$ denotes the conjugate of $\lambda$, see Section~\ref{subsec: local systems}). 
The result now follows from Corollary~\ref{Deligne corollary}, Proposition~\ref{prop:H0} and Corollary~\ref{weight corollary} (where we use part $(2)$ in the case of $n=3$). 
\end{proof}

\subsection{Genus four}\label{sec:genus4final} Using the above general discussion, we now prove Theorems \ref{thm:Mgn} and \ref{thm:Mbargn} when $g=4$. The proof uses Proposition \ref{weight proposition}, which determines $\chi_c^{S_n}(\Mb_{g,n})$. But note that the input data includes $\chi_c^{S_n}(\partial M_{g,n})$, and knowing both $\chi_c^{S_n}(\Mb_{g,n})$ and $\chi_c^{S_n}(\partial M_{g,n})$ determines $\chi_c^{S_n}(M_{g,n})$. Alternatively, one can simply follow the proof of Theorem \ref{thm:Mgn} assuming Theorem \ref{thm:Mbargn} given in Section \ref{sec:GetzlerKapranov}.

\begin{proof}[Proof of Theorems \ref{thm:Mgn} and \ref{thm:Mbargn} when $g=4$]
Note first that $\chi_c^{S_n}(M_{1,1},\V_a)$ is known for any $a \geq 0$ by \cite{getzler-resolving} and hence that we can compute $\chi_c^{S_n}(M_{1,n})$ for any $n \geq 1$. So, if we fix $n \leq 7$ and use the results mentioned in Remark~\ref{smaller Eulers} and the Getzler-Kapranov method in Section~\ref{sec:GetzlerKapranov}, then if we know $\chi_c^{S_n}(M_{4,n'})$ for $n'<n$, we can compute $\chi_c^{S_n}(\partial  M_{4,n})$. Already from earlier, we know $\chi_c^{S_n}(M_{4,n})$ for $n \leq 3$, see \cite{BergstromFaberPayne}. 

By Theorem~\ref{thm:CLPW} we know that $\chi_c^{S_n}(M_{4,n})$ is Tate for $n \leq 6$. This means that $\chi_c(M_4,\V_\lambda)=\sum_i a_{i,\lambda} \L^i$ for some integers $a_{i,\lambda}$ for all $|\lambda| \leq 6$. By, Proposition~\ref{weight proposition} it follows that, for a fixed $n$, if we know $\chi_c^{S_n}(M_{4,n'})$ for $n'<n$ then we only need to determine $a_{i+\lceil (9+n)/2 \rceil,\lambda}$ for $i=0,1,2,3$ and all $\lambda$ such that $|\lambda|=n$ to be able to compute $\chi_c^{S_n}(M_{4,n})$. 

The $S_n$-equivariant integer valued Euler characteristic of $M_{4,n}$ is known for all $n$ (see \cite{Gorsky}), and hence $\sum_i a_{i,\lambda}$ is known for all $\lambda$ such that $|\lambda| \leq 6$. Furthermore, $\sum_i a_{i,\lambda} q^i$ is known from counts of points over finite fields for $q=2,3,4$ for all $\lambda$ such that $|\lambda| \leq 6$, see \cite{BergstromFaberPayne} for $q=2,3$ and from personal communication with Carel Faber for $q=4$.

\renewcommand{\arraystretch}{1.2}
\begin{table}
    
{\centering\footnotesize
\begin{tabularx}{\textwidth}{|c|>{\raggedright\arraybackslash}X|}		\hline
		$n $&$\chi_c^{S_n}(M_{4,n}) $\\
		\hline
		$4$&$
		\L^{13}+2\L^{12}+3\L^{11}-3\L^{10}-6\L^9+3\L^8-2\L^4+5\L^3-3\L)s_{4}$\\
		&$+(\L^{12}+2\L^{11}-\L^{10}-7\L^9+5\L^8-\L^7+\L^5-5\L^4+8\L^3-4\L-1)s_{3,1} $\\
		&$+
		(\L^{11}-\L^{10}-2\L^9+2\L^8-\L^7+\L^6+\L^5-3\L^4-3\L^2-3\L)s_{2^2}$\\
		&$+(-2\L^9+2\L^8-\L^5-5\L^4+\L^3+2\L^2+\L)s_{2,1^2}+
		(-\L^5-\L^4+2\L^2+3\L+1)s_{1^4} $\\
		 $5$&$
		(\L^{14}+2\L^{13}+3\L^{12}-3\L^{11}-8\L^{10}+4\L^9+2\L^8-\L^7+\L^6-2\L^5+4\L^4-2\L^3-4\L^2+
		3\L+2)s_{5}$\\
		&$+(\L^{13}+2\L^{12}-2\L^{11}-10\L^{10}+7\L^9+2\L^8-3\L^7+1+\L^6-3\L^5+9\L^4-6
		\L^2-\L)s_{4,1}$\\
		&$+(\L^{12}-\L^{11}-5\L^{10}+8\L^9-2\L^8-2\L^7+3\L^6+5\L^4-8\L^3-5\L^2+\L+1)
		s_{3,2}$\\
		&$+(-3\L^{10}-3+4\L^9-3\L^6-\L^5+4\L^4+3\L^3+5\L^2-6\L)s_{3,1^2}$\\
		&$+(-\L^{10}+3\L^
		9-\L^8+\L^7-2\L^6-\L^5-3\L^4-9\L^3+3\L^2+2\L)s_{2^2,1}$\\
		&$+(\L^9-5\L^6-\L^4+\L^3+11\L^2+
		2\L-1)s_{2,1,1,1}+(-\L^6+5\L^2+5\L+1)s_{1^5}
		$\\
		 $6$&$
		(\L^{15}+2\L^{14}+3\L^{13}-3\L^{12}-9\L^{11}+3\L^{10}+5\L^9-2\L^8+4\L^7-2\L^6-8\L^3+7\L^2+9
		\L)s_{6}$\\
		&$+(\L^{14}+2\L^{13}-2\L^{12}-12\L^{11}+9\L^{10}+9\L^9-12\L^8+7\L^7-3\L^6-2\L^5+11
		\L^4-14\L^3-4\L^2+14\L 
		+4)s_{5,1}$\\
		&$+(\L^{13}-2\L^{12}-8\L^{11}+11\L^{10}+4\L^9-13\L^8+10\L^
		7-9\L^5+11\L^4-9\L^3-8\L^2+10\L+1)s_{4,2}$\\
		&$+(-4\L^{11}+5\L^{10}+2\L^9-8\L^8+\L^7+5\L^
		6-8\L^5+24\L^4+10\L^3-36\L^2-9\L+1)s_{4,1^2}$\\
		&$+(-3\L^{11}+7\L^{10}+\L^9-9\L^8+6\L^7+5
		\L^6-2\L^5+3\L^4-9\L^3-10\L^2+6\L+2)s_{3^2}$\\
		&$+(-2\L^{11}+7\L^{10}-\L^9-7\L^8+6\L^7+6
		\L^6-22\L^5+13\L^4+16\L^3-27\L^2-3\L+2)s_{3,2,1}$\\
		&$+(\L^{10}-7\L^7+5\L^6-11\L^5+16\L^
		4+31\L^3-24\L^2-18\L-2)s_{3,1^3}$\\
		&$+(\L^{10}-2\L^9+\L^8+2\L^7-10\L^5-3\L^4+7\L^3-\L-\
		2)s_{2,2,2}$\\
		&$+(\L^{10}-4\L^7+2\L^6-14\L^5+3\L^4+18\L^3-9\L^2-\L+1)s_{2^2,1^2}$\\
		&$+(-3\L^7+3\L^6-4\L^5+4\L^4+18\L^3+5\L^2-1)s_{2,1^4}+(2\L^4+4\L^3+6\L^2+5\L+1)s_{1^6} $\\
		 $7$&$		(\L^{16}+2\L^{15}+3\L^{14}-3\L^{13}-10\L^{12}+2\L^{11}+7\L^{10}-4\L^9+7\L^8 -2\L^7-2\L^6
		+4\L^5-15\L^4+11\L^3+16\L^2-7\L-4)s_{7}$\\&$
		+(\L^{15}+2\L^{14}-2\L^{13}-13\L^{12}+8\L^{11}+16\L^{10}
		-20\L^9+17\L^8-8\L^7-7\L^6+29\L^5-49\L^4+4\L^3
		+54\L^2+5\L-3)s_{6,1}$\\&$
		+(\L^{14}-2\L^{13}
		-10\L^{12}+14\L^{11}+15\L^{10}-34\L^9+27\L^8-14\L^7-19\L^6+53\L^5-69\L^4+74\L^2
		-5\L-7)s_{5,2}$\\&$
		+(-4\L^{12}+6\L^{11}+8\L^{10}-22\L^9+14\L^8-7\L^7-7\L^6+64\L^5-49\L^4-57\L^3
		+44\L^2+28\L+6)s_{5,1^2}$\\&$
		+(-\L^{13}-6\L^{12}+10\L^{11}+10\L^{10}-28\L^9+24\L^8-6\L^7-12\L^6
		+47\L^5-68\L^4-19\L^3+61\L^2
		+\L-5)s_{4,3}$\\&$
		+(-3\L^{12}+9\L^{11}+4\L^{10}-33\L^9+37\L^8
		-16\L^7-31\L^6+103\L^5-81\L^4-63\L^3+82\L^2+5\L
		-3)s_{4,2,1}$\\&$
		+(\L^{11}-5\L^9+\L^8-4\L^7
		-6\L^6+59\L^5-10\L^4-68\L^3-3\L^2+2\L+3)s_{4,1^3}$\\&$
		+(-\L^{12}+5\L^{11}+2\L^{10}-18\L^9
		+23\L^8-2\L^7-17\L^6+51\L^5-58\L^4-41\L^3+64\L^2+20\L+2)s_{3^2,1}$\\&$
		+(3\L^{11}-2\L^{10}
		-11\L^9+22\L^8-10\L^7-23\L^6+41\L^5-34\L^4-5\L^3+44\L^2-20\L-9)s_{3,2^2}$\\&$
		+(\L^{11}
		-5\L^9+11\L^8-12\L^7-27\L^6+69\L^5-32\L^4-56\L^3+53\L^2+8\L)s_{3,2,1^2}$\\&$
		+(-3\L^8
		-\L^7-5\L^6+28\L^5+8\L^4-9\L^3+3\L^2-21\L-6)s_{3,1^4}$\\&$
		+(-\L^{10}+6\L^8-8\L^7-15\L^6
		+17\L^5-16\L^4-4\L^3+30\L^2-7\L-6)s_{2^3,1}$\\&$
		+(-\L^8-9\L^6+16\L^5-5\L^4-3\L^3
		+37\L^2+8\L-1)s_{2^2,1^3}$\\&$
		+(\L^7-2\L^6+8\L^5+3\L^4+12\L^3+21\L^2-3\L-4+S[12])s_{2,1^5}$\\&$
		+(2\L^5+\L^4+3\L^3+9\L^2+5\L-\L S[12])s_{1^7} $\\ \hline
	\end{tabularx}}\hspace{5pt}
    \caption{Euler characteristics of $M_{4,n}$.}
\end{table}

Following the proof of Proposition~\ref{weight proposition} we can write a formula for $\chi_c^{S_n}(\overline M_{4,4})$ in terms of the unknowns $a_{i+\lceil (3g-3+n)/2 \rceil,\lambda}$ for $i=0,1,2,3$ and $\lambda$ such that $|\lambda|=4$. Since we know $\chi_c^{S_n}(\partial  M_{4,4})$ by the above, we get $4$ relations (from the $q=1,2,3,4$ computations) among the $a_{i+\lceil (3g-3+n)/2 \rceil,\lambda}$ for $i=0,1,2,3$ and each $\lambda$ such that $|\lambda|=4$. These relations are independent, and hence we can determine $\chi_c^{S_n}(M_{4,4})$.

\begin{table}
	
	{\centering\footnotesize
		\begin{tabularx}{\textwidth}{|c|>{\raggedright\arraybackslash}X|}		\hline
			$n$&$\chi_c^{S_n}(\overline{M}_{4,n}) $ \\
			\hline
			 $4$&$ 
(\L^{13}+14\L^{12}+127\L^{11}+746\L^{10}+2842\L^9+6993\L^8+11002\L^7+\ldots
)s_{4}$\\&$+(7\L^{12}+108\L^{11}+800
\L^{10}+3472\L^9+9155\L^8+14863\L^7+\ldots
)s_{3,1}$\\&$
+(3\L^{12}+45\L^{11}+358\L^{10}+1606\L^9+4333\L^8+7102\L^7+\ldots
)s_{2^2}$\\&$
+(16\L^{11}+199\L^{10}+1070\L^9+3138\L^8+5329\L^7+\ldots
)s_{1^4}$\\
$5$&$
(\L^{14}+16\L^{13}+174\L^{12}+1188\L^{11}+5376\L^{10}+15870\L^9+30598\L^
8+38075\L^7+\ldots
)s_{5}$\\&$
+(10\L^{13}+175\L^{12}+1551\L^{11}+8079\L^{10}+26030\L^9+52457\L^8+66280\L^7+\ldots
)s_{4,1}$\\&$
+(5\L^{13}+109\L^{12}+1076\L^{11}+6020\L^{10}+20190\L^9+41612\L^8+52923\L^7+\ldots
)s_{3,2}$\\&$
+(34\L^{12}+523\L^{11}+3461\L^{10}+12695\L^9+27248\L^8+35136\L^7+\ldots
)s_{3,1^2}$\\&$
+(20\L^{12}+305\L^{11}+2105\L^{10}+7844\L^9+17060\L^8+22041\L^7+\ldots
)s_{2^2,1}$\\&$
+(38\L^{11}+422\L^{10}+1909\L^9+4537\L^8+6015\L^7+\ldots
)s_{2,1^3}$\\&$
+(8\L^{10}+52\L^9+152\L^8+207\L^7+\ldots
)s_{1^5}$\\
$6$ &$
(\L^{15}+19\L^{14}+229\L^{13}+1801\L^{12}+9399\L^{11}+32729\L^{10}+75657\L
^9+115311\L^8+\ldots
)s_{6}$\\&$
+(12\L^{14}+255\L^{13}+2637\L^{12}+16242\L^{11}+62594\L^{10}+153648\L^9+240821\L^8+\ldots
)s_{5,1}$\\&$
+(8\L^{14}+199\L^{13}+2347\L^{12}+15693\L^{11}+63872\L^{10}+161960\L^9+257796\L^8+\ldots
)s_{4,2}$\\&$
+(61\L^{13}+1094\L^{12}+8730\L^{11}+38926\L^{10}+103605\L^9+168420\L^8+\ldots
)s_{4,1^2}$\\&$
+(2\L^{14}+70\L^{13}+899\L^{12}+6350\L^{11}+26634\L^{10}+68765\L^9+110335\L^8+\ldots
)s_{3^2}$\\&$
+(52\L^{13}+1023\L^{12}+8726\L^{11}+40514\L^{10}+110380\L^9+181409\L^8+\ldots
)s_{3,2,1}$\\&$
+(115\L^{12}+1540\L^{11}+8722\L^{10}+26161\L^9+44781\L^8+\ldots
)s_{3,1^3}$\\&$
+(8\L^{13}+174\L^{12}+1588\L^{11}+7718\L^{10}+21585\L^9+35906\L^8+\ldots
)s_{2^3}$\\&$
+(74\L^{12}+1052\L^{11}+6055\L^{10}+18382\L^9+31627\L^8+\ldots
)s_{2^2,1^2}$\\&$
+(61\L^{11}+559\L^{10}+2058\L^9+3824\L^8+\ldots
)s_{2,1^4}$\\&$
+(4\L^{10}+25\L^9+52\L^8+\ldots
)s_{1^6}$\\
$7$&$
(\L^{16}+21\L^{15}+291\L^{14}+2575\L^{13}+15378\L^{12}+61966\L^{11}+169004\L^{10}
+309547\L^9+379168\L^8+\ldots
)s_{7}$\\&$
+(15\L^{15}+352\L^{14}+4171\L^{13}+29627\L^{12}+133899\L^{11}+392209\L^{10}+748019\L^9+927874\L^8+\ldots
)s_{6,1}$\\&$
+(10\L^{15}+308\L^{14}+4250\L^{13}+33501\L^{12}+162093\L^{11}+496352\L^{10}+970380\L^9+1213554\L^8+\ldots
)s_{5,2}$\\&$
+(94\L^{14}+1975\L^{13}+18528\L^{12}+98592\L^{11}+318089\L^{10}
+639173\L^9+805894\L^8+\ldots
)s_{5,1^2}$\\&$
+(5\L^{15}+175\L^{14}+2646\L^{13}+22098\L^{12}+111285\L^{11}+349514\L^{10}+693261\L^9+870955\L^8+\ldots
)s_{4,3}$\\&$
+(102\L^{14}+2377\L^{13}+24255\L^{12}+136140\L^{11}+454840\L^{10}+931456\L^9+1181947\L^8+\ldots
)s_{4,2,1}$\\&$
+(257\L^{13}+4186\L^{12}+28810\L^{11}+106942\L^{10}+230613\L^9
+297256\L^8+\ldots
)s_{4,1^3}$\\&$
+(38\L^{14}+1003\L^{13}+10758\L^{12}+62435\L^{11}+212531\L^{10}+439900\L^9+559919\L^8+\ldots
)s_{3^2,1}$\\&$
+(22\L^{14}+626\L^{13}+7222\L^{12}+43606\L^{11}+152264\L^{10}+319202\L^9+408131\L^8+\ldots
)s_{3,2^2}$\\&$
+(278\L^{13}+4791\L^{12}+34291\L^{11}+130223\L^{10}+284439\L^9+368085\L^8+\ldots
)s_{3,2,1^2}$\\&$
+(254\L^{12}+2904\L^{11}+13557\L^{10}+32461\L^9+43208\L^8+\ldots
)s_{3,1^4}$\\&$
+(66\L^{13}+1194\L^{12}+8904\L^{11}+34636\L^{10}+76689\L^9+99661\L^8+\ldots
)s_{2^3,1}$\\&$
+(183\L^{12}+2177\L^{11}+10216\L^{10}+24668\L^9+32841\L^8+\ldots
)s_{2^2,1^3}$\\&$
+(61\L^{11}+449\L^{10}+1298\L^9+1813\L^8+\ldots
)s_{2,1^5}$\\&$
+(\L^{10}+5\L^9+6\L^8+\ldots
)s_{1^7}$\\  \hline
	\end{tabularx}}\hspace{5pt}
	\caption{Euler characteristics of $\overline{M}_{4,n}$.}
\end{table}

In the same way, we can now use the information above to get $4$ relations from which we can determine $\chi_c^{S_n}(M_{4,5})$, and then in the next step $\chi_c^{S_n}(M_{4,6})$. 

Computing $\chi_c^{S_n}(\partial  M_{4,7})$ we find the contribution $S[12]s_{2,1^5}-\L S[12]s_{1^7}$, and all other contributions are of Tate type. The contributions from $\chi_c^{S_n}(M_{4,7})$ that are not determined by duality as above are the ones of weight between $16$ and $23$. It will then suffice to show that $H^k(\Mb_{4,7})$ is of Tate type (between degree $16$ and $23$) because then we can use the $4$ relations from $q=1,2,3,4$ in the same way as above to determine $\chi_c^{S_n}(M_{4,7})$.

\begin{table}
	
	{\centering\footnotesize
		\begin{tabular}{|l|l|}
        \hline
			$\lambda $&$\chi_c(M_{4},\V_\lambda) $ \\
			\hline
			 ${4}$ & $-\L^5+\L^3, $ \\
			 ${3, 1}$ & $\L^8-2\L^5-\L^4+\L^3+\L,$ \\
			 ${2, 2}$ & $-\L^8+\L^7+\L^4-\L^2-2\L,$ \\
			 ${2, 1, 1}$ & $-\L^9+\L^7+2\L^3-\L-1,$ \\
			 ${1, 1, 1, 1}$ & $-\L^8-\L^2-3\L-1,$ \\
			 ${5}$ & $-\L^5+\L^3-3\L^2-4\L-1, $ \\
			 ${4, 1}$ & $2\L^6-2\L^5+2\L^4+\L^3-7\L^2-\L+1, $ \\
			 ${2, 1, 1, 1}$ & $\L^9-\L^8+\L^7-\L^5+2\L^4-2\L^3+2\L,$ \\
			 ${3, 1, 1}$ & $-2\L^8+\L^7+\L^6-2\L^5+4\L^4-2\L^3-4\L^2+4\L+2, $ \\
			 ${3, 2}$ & $\L^9-2\L^8+\L^6-2\L^5+5\L^4-5\L^2+\L+1, $ \\
			 ${2, 2, 1}$ & $2\L^9-2\L^8+\L^7-\L^6-2\L^5+5\L^4-2\L^3-\L^2+3\L+1, $ \\
			 ${6}$ & $-\L^5+2\L^4+2\L^3-\L^2, $ \\
			 ${5, 1}$ & $\L^6-2\L^5+5\L^4+6\L^3-\L^2-1, $ \\
			 ${4, 2}$ & $\L^7-3\L^6-4\L^5+7\L^4+6\L^3-5\L^2-5\L-2, $ \\
			 ${4, 1, 1}$ & $ -3\L^6-3\L^5+7\L^4+6\L^3-4\L^2-6\L-2, $ \\
			 ${3, 3}$ & $ 
			-\L^{10}-\L^9+\L^8+\L^7-\L^6-2\L^5+2\L^4+2\L^3-\L^2-2\L-1, $ \\
			 ${3, 2, 1}$ & $-\L^{10}+\L^9+2\L^8+2\L^7-4\L^6-4\L^5+8\L^4+4\L^3-7\L^2-6\L-1, $ \\
			 ${3, 1, 1, 1}$ & $\L^7-\L^6-\L^5+3\L^4+\L^3-8\L^2-6\L-2, $ \\
			 ${2, 2, 2}$ & $\L^4+\L^3-3\L^2, $ \\
			 ${2, 2, 1, 1}$ & $\L^{11}-\L^{10}+\L^8+\L^7-2\L^6-2\L^5+2\L^4-\L, $ \\
			 ${7}$ & $ -\L^5+2\L^4+\L^3-6\L^2-4\L+\L S[12],$ \\
			 ${6,1}$ & $ -\L^5+7\L^4-7\L^3-16\L^2+3\L+4-S[12],$ \\
			 ${5,2}$ & $ 2\L^7-4\L^6+5\L^5+9\L^4-22\L^3-17\L^2+6\L+3,$ \\
			 ${5,1,1}$ & $ 3\L^7-5\L^6+\L^5+5\L^4-19\L^3-6\L^2+14\L+5,$ \\
			 ${4,3}$ & $ \L^{10}-\L^9+\L^7-3\L^6+9\L^5+6\L^4-16\L^3-7\L^2+7\L+3,$ \\
			 ${4,2,1}$ & $ -2\L^9+2\L^8+4\L^7-10\L^6+16\L^5+7\L^4-33\L^3-2\L^2+16\L+4,$ \\
			 ${4,1,1,1}$ & $ -\L^9+\L^8+2\L^7-4\L^6+5\L^5-11\L^3+5\L^2+8\L+1,$ \\
			 ${3,3,1}$ & $ \L^{10}-3\L^9+2\L^8-\L^7-6\L^6+12\L^5+\L^4-15\L^3+\L^2+9\L+3,$ \\
			 ${3,2,2}$ & $ \L^{10}-3\L^9+2\L^8-2\L^7-4\L^6+12\L^5-\L^4-13\L^3-\L^2+3\L,$ \\
			 ${3,2,1,1}$ & $ \L^{10}-2\L^9+2\L^8-\L^7-5\L^6+12\L^5-\L^4-13\L^3+5\L^2+7\L+1,$ \\
			 ${2,2,2,1}$ & $ \L^{10}-\L^9-\L^7-\L^6+5\L^5-\L^3+2\L^2. $\\ \hline
	\end{tabular}}
	\caption{Euler characteristics of $M_{4}$ with local system coefficients.}
\end{table}

Consider the sequence 
\[
H^{k-2}(\widetilde{\partial M}_{4,7})(-1)\to H^k(\Mb_{4,7}) \to W_kH^k(M_{4,7})\to 0
\]
as in \cite[Section 2]{petersentommasi}. Since $M_{4,7}$ has the Chow--K\"unneth generation property we know that the weight $k$ contribution to $H^{k}(M_{4,7})$ is of Tate type, by \cite[Lemma 9.11]{CL} and \cite[Lemma 4.3]{CLP}. Since the boundary divisors will consist of contributions from the cohomology of $\Mb_{3,9}/S_2$, $\Mb_{4,1+j} \times \Mb_{0,8-j}$ for $0 \leq j \leq 5$, $\Mb_{4-i,1+j} \times \Mb_{i,8-j}$ for $1 \leq i \leq 2$ and $0\leq j \leq 7$, we know by Remark~\ref{smaller Eulers} and the previous computations, that the cohomology of $\widetilde{\partial M}_{4,7}$ will be of Tate type. 
From this we conclude that the cohomology of $\Mb_{4,7}$ is of Tate type.
\end{proof}

\bibliographystyle{alpha}
\bibliography{bibax}

\end{document}